\documentclass[11pt]{amsart}
\usepackage[utf8]{inputenc}
\usepackage{setspace}
\usepackage{geometry}
\usepackage{xcolor, amsmath, amssymb, latexsym, bbm, mathtools}
\usepackage[mathscr]{euscript}
\usepackage{enumitem}
\usepackage{tikz}
\usepackage{wrapfig}
\usepackage{graphicx}
\usepackage{subcaption}
\usepackage{float}
\usepackage{comment}
\usepackage{thmtools}
\usetikzlibrary{calc}
\usetikzlibrary{arrows.meta}

\usepackage[colorlinks=true, citecolor=blue, linkcolor=blue, urlcolor=blue]{hyperref}

\calclayout
\numberwithin{equation}{section}
\numberwithin{figure}{section}
\allowdisplaybreaks

\theoremstyle{plain}
\newtheorem{theorem}{Theorem}[section]
\newtheorem*{theorem*}{Theorem}

\newtheorem{lemma}[theorem]{Lemma}
\newtheorem{proposition}[theorem]{Proposition}

\theoremstyle{definition}
\newtheorem{definition}[theorem]{Definition}
\newtheorem{question}[theorem]{Question}

\theoremstyle{remark}

\newtheorem{note}[theorem]{Note}

\title{Systole Increasing Deformations to Maximal Translation Surfaces}

\author{Achintya Dey}

\address{
Department of Mathematics\\ 
Indian Institute of Technology  \\ 
Kanpur, Uttar Pradesh-208016\\
India}
\email{achintd@iitk.ac.in}
\author{Bidyut Sanki}
\address{
Department of Mathematics\\ 
Indian Institute of Technology  \\ 
Kanpur, Uttar Pradesh-208016\\
India}
\email{bidyut@iitk.ac.in}

\begin{document}

\subjclass{Mathematics Subject Classification (2020): 30F10, 32G15, 53C22}
\keywords{Translation surface, saddle connection, systole, maximal translation surface, square-tiled surface, deformation.}
\begin{abstract}
A unit-area translation surface is called \emph{maximal} if it maximizes the length of the shortest saddle connection among all surfaces in the same stratum. We investigate whether a non-maximal translation surface can be continuously deformed into a maximal one while the systole increases strictly monotonically. Although such a deformation does not exist in general due to the existence of local but not global maxima of the systole function, we prove that it exists for square-tiled surfaces and translation surfaces obtained from regular hexagons and regular octagons. In each case, we explicitly construct a continuous deformation to a maximal translation surface along which the systole is strictly increasing. Finally, the preceding construction yields another family of translation surfaces admitting continuous systole-increasing deformations to maximal surfaces.
\end{abstract}

\maketitle
\section{Introduction}

The study of translation surfaces has become a central theme in modern geometry, with deep connections to flat geometry, Teichmüller theory, and dynamical systems. A \emph{translation surface} is obtained from a finite union of Euclidean polygons by identifying parallel sides of opposite orientation via translations. The resulting surface is flat except at a finite set of conical \emph{singular points}, each having total angle $2\pi(k+1)$, for some $k\in\mathbb{N}$. A \emph{saddle connection} on a translation surface is a geodesic (straight line) segment joining two (possibly identical) singular points and containing no singular point in its interior. Among all saddle connections, those of minimal length are called the \emph{shortest saddle connections or systolic connections}, and their common length is referred to as the \emph{systole} of the surface. For integers $k_1,\dots,k_n \ge 0$, the \emph{stratum} $\mathcal{H}(k_1,\dots,k_n)$ consists of all translation surfaces with exactly $n$ singularities having cone angles $2\pi(k_i+1)$. Throughout this article, we restrict to translation surfaces of area one and \(\mathcal{H}(k_1,\dots,k_n)\) denotes the stratum of unit-area translation surfaces.
Within a fixed stratum, surfaces that maximize the systole are called \emph{maximal surfaces}. 

In the hyperbolic setting, the systole function $\mathrm{sys}:\mathcal{T}_g\to\mathbb{R}$
is defined on the Teichm\"uller space $\mathcal{T}_g$ \cite{farb2011primer} of closed hyperbolic surfaces of genus $g\geq 2$, where for a hyperbolic surface $X\in\mathcal{T}_g$, the value $\mathrm{sys}(X)$ is the length of the shortest non-contractible simple closed geodesic on $X$. A hyperbolic surface is called a \emph{global} (respectively, \emph{local}) \emph{maximal surface} if it realizes a global (respectively, local) maximum of the systole function. Despite extensive study, the global maxima of the systole function are known only in genus $2$. Jenni~\cite{Jenni} proved that the unique maximal surface in $\mathcal{T}_2$ is the Bolza surface and showed that
$\cosh(\mathrm{sys}(X)/2)\leq (1+\sqrt{2})$ for every $X\in\mathcal{T}_2$, with equality if and only if $X$ is the Bolza surface. Schmutz~\cite{SchmutzGlobalMaximal} further proved that every maximal surface of genus $g$ contains at least $6g-5$ systolic geodesics (see Theorem~2.8 of \cite{SchmutzGlobalMaximal}). More recently, Dey-Saha-Sanki~\cite{DeySahaSankiToAppear} constructed global maximal surfaces of genus $g$ with $2g-2$ punctures, where $g=kn+1$ for integers $k\geq 2$ and $n\geq 4$. Their construction provides explicit examples of global maxima of the systole function in infinitely many topological types.

The analogous problem for translation surfaces has attracted considerable attention in recent years. A systematic study of maximal translation surfaces was carried out by Boissy-Geninska~\cite{Bo21}. The authors \cite{Bo21} established that, in every area-one stratum $\mathcal{H}(k_1,\dots,k_n)$ of translation surfaces of genus $g$, the global maxima of the systole function are realized precisely by translation surfaces obtained by gluing equilateral triangles whose side length is $\left(\frac{\sqrt{3}}{2}(2g-2+n)\right)^{-1/2}.$ Moreover, these surfaces have exactly $\sum_{i=1}^{n}3(k_i+1)$ shortest saddle connections, which is the maximum possible number among all surfaces in the stratum $\mathcal{H}(k_1,k_2,\dots,k_n)$. The study of systolic structures on translation surfaces has also been pursued from several other perspectives. Judge-Parlier~\cite{Pa19} determined the maximum number of systoles, in the classical sense of shortest non-contractible simple closed geodesics, on genus-two translation surfaces. Columbus-Herrlich-M\"utzel-Schmith\"usen~\cite{MR4790974} investigated maximal translation surfaces from the viewpoint of the homological systolic ratio. Their notion of maximality differs from the one adopted in the present paper, as they call a translation surface maximal if it attains the supremal homological systolic ratio. Recently, Dey-Sanki~\cite{MR4985436} studied systolic embeddings of graphs on translation surfaces and determined the extremal genera of translation surfaces admitting such embeddings.

The aim of this article is to understand how a non-maximal translation surface in a given stratum can be continuously deformed into a maximal translation surface. More precisely, we ask whether the systole can be increased monotonically along a continuous path contained entirely within the stratum until a maximal translation surface is reached. This leads to the following central question of this article.

\begin{question} \label{main question}
Let X $\in \mathcal{H}(k_1,\dots,k_n)$ be a translation surface which is not a global maximum of the systole function on $\mathcal{H}(k_1,\dots,k_n)$. Does there exist a continuous function $\gamma: [0, 1]\to \mathcal{H}(k_1, \dots, k_n)$ such that 
\begin{enumerate}
    \item $\gamma(0)$= X and $\gamma(1)$ is a maximal surface and
    \item for all $t^\prime < t\in[0,1]$, we have $\mathrm{sys}(\gamma(t^\prime))<\mathrm{sys}(\gamma(t))$?
\end{enumerate}
\end{question}

In general, the answer to Question~\ref{main question} is negative. Indeed, if $X$ is a local maximum ( not global) of the systole function, then there is no continuous deformation of $X$ within the stratum along which the systole increases monotonically. Consequently, such a surface cannot be deformed to a maximal translation surface while strictly increasing its systole throughout the deformation. Examples of local maxima of the systole function can be found in Theorem~4.1 of \cite{Bo21}.

The natural question is whether there exist classes of translation surfaces in a fixed stratum for which Question~\ref{main question} has an positive answer. In this article, we give an affirmative answer to this question for the following three classes of translation surfaces in a fixed stratum $\mathcal{H}(k_1,\dots,k_n)$:
\begin{enumerate}
    \item $\mathcal{P}$, the class of square-tiled surfaces, i.e., translation surfaces obtained by gluing Euclidean squares;
    \item $\mathcal{Q}$, the class of translation surfaces obtained by gluing regular Euclidean hexagons; and
    \item $\mathcal{R}$, the class of translation surfaces obtained by gluing regular Euclidean octagons.
\end{enumerate}

In particular, we obtain the following main result:

\begin{restatable}{theorem}{SquareTiledTheorem}\label{main result 1}
  Let $X_{\mathrm{reg}} \in \mathcal{P} \cup \mathcal{Q} \cup \mathcal{R}$ in the stratum $\mathcal{H}(k_1,\dots,k_n)$. Then there exists a continuous map $\gamma:[0,1] \longrightarrow \mathcal{H}(k_1,\dots,k_n),
\; $ such that 
\begin{enumerate}
    \item $\gamma(0)=X_{reg}$ and $\gamma(1)$ is a maximal surface and
    \item $\mathrm{sys}(\gamma(t^\prime))<\mathrm{sys}(\gamma(t))$ for all $t^\prime< t\in [0,1].$
\end{enumerate}  
\end{restatable}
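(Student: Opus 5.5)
The plan is to deform each polygon directly into a union of equilateral triangles, since by Boissy–Geninska \cite{Bo21} the maximal surfaces in $\mathcal{H}(k_1,\dots,k_n)$ are exactly those glued from equilateral triangles of side $\left(\frac{\sqrt{3}}{2}(2g-2+n)\right)^{-1/2}$. The common mechanism is to triangulate the polygon decomposition of $X_{\mathrm{reg}}$ by saddle connections and deform the triangles affinely, in a way compatible with all the gluings, so that each one moves from its initial shape to an equilateral triangle. Rescaling to unit area at every time keeps the path inside the unit-area stratum. The stratum does not change, because the combinatorics of the triangulation and the cone angles, being sums of triangle angles around each vertex, are preserved along the path.

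\textbf{Square-tiled surfaces.} I would cut each unit square along the same diagonal, say from bottom-left to top-right. Then I would apply the single linear map $A_t\in GL_2^+(\mathbb{R})$ taking $(1,0),(0,1)$ to $(1,0),(t,\tfrac{\sqrt3}{2}t')$, chosen along a path from the identity to the map $(1,0)\mapsto(1,0)$, $(0,1)\mapsto(\tfrac12,\tfrac{\sqrt3}{2})$, and renormalize by area. Because one linear map acts on every square, the gluings by translation are preserved and $\gamma(t)=A_t\cdot X_{\mathrm{reg}}$ (normalized) lies in the same stratum. The endpoint has every square turned into a rhombus made of two equilateral triangles, so it is maximal. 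For monotonicity, note that saddle connections on a square-tiled surface have holonomy in $\mathbb{Z}^2$. After normalization, the systole of $\gamma(t)$ is therefore $\min_{v\in\Lambda}|A_tv|/\sqrt{\det A_t}$ over some sublattice set containing the primitive sides. For the path from the square lattice to the hexagonal lattice, this is the normalized shortest vector of the lattice $A_t\mathbb{Z}^2$. Along a suitable path, for instance $(0,1)\mapsto(s/2,\ 1-s(1-\tfrac{\sqrt3}{2}))$, the Hermite quantity is strictly increasing, since $(1,0)$ and $A_t(0,1)$ remain the shortest vectors and the determinant decreases. A short computation would confirm this.

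\textbf{Hexagons and octagons.} For hexagons, I would use the fact that a regular hexagon is already six equilateral triangles, but its vertices are not all singular; the systole is a side. I would collapse toward the maximal configuration by subdividing each hexagon into three rhombi (or two trapezoids and a triangle) whose vertices are the cone points, and deforming these into equilateral pieces. One candidate is the path that shrinks the triangles meeting at the nonsingular center points while keeping side lengths equal, using the symmetry to keep all shortest connections equal. For octagons, a regular octagon contains a square and four rectangles and triangles, and I would decompose it with the same symmetry-equivariant deformation. In both cases the key is to parametrize the shapes by a single parameter, so that the area-normalized minimal side length is an explicit function whose derivative can be checked to be positive.

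\textbf{Main obstacle.} The hardest step is proving strict monotonicity of the systole for the hexagon and octagon families. New short saddle connections, such as diagonals across the deformed pieces, could become shorter than the sides along the way. I would handle this by showing that every saddle connection of $\gamma(t)$ is a concatenation in the developing map of edges of the triangulation whose lengths are at least the minimal edge length. That reduces the systole to the minimum over finitely many edge lengths, whose normalized values are explicitly increasing.
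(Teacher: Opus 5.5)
There are two genuine gaps: the explicit square-case path fails condition (2), and the hexagon and octagon cases contain no actual deformation and rely on a false lemma.

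\textbf{Square-tiled case.} Your mechanism is the same as the paper's Case 1: one linear map applied to every square, ending at rhombi of angle $\frac{\pi}{3}$, then area normalization. The explicit path you propose, however, violates condition (2). Put $c=1-\frac{\sqrt3}{2}$ and $b_2(s)=(\frac{s}{2},\,1-cs)$. Since $c^2+\frac14=2c$, one gets $|b_2(s)|^2=1-2cs(1-s)<1$ for $0<s<1$; the chord from $(0,1)$ to $(\frac12,\frac{\sqrt3}{2})$ lies inside the unit circle. The images of the vertical sides are saddle connections with holonomy $b_2(s)$. So, with $r$ squares, the normalized systole squared is at most $\frac1r\cdot\frac{|b_2(s)|^2}{\det A_s}=\frac1r\bigl(1+\frac{cs(2s-1)}{1-cs}\bigr)$. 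This is $<\frac1r$, the value at $s=0$, for every $0<s<\frac12$, so the systole first decreases. Your justification (``$(1,0)$ and $A_t(0,1)$ remain the shortest vectors and the determinant decreases'') misses that $|A_t(0,1)|$ changes too.

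The fix is the paper's choice, which keeps $|A_\epsilon(0,1)|=1$: send $(0,1)\mapsto(\sin\epsilon,\cos\epsilon)$ for $\epsilon\in[0,\frac{\pi}{6}]$. Then $\langle(1,0),(\sin\epsilon,\cos\epsilon)\rangle=\sin\epsilon\le\frac12$, so this unit basis is reduced. Every saddle connection has holonomy in $A_\epsilon\mathbb{Z}^2\setminus\{0\}$, hence length $\ge1$, so the systole stays $1$ while the area $r\cos\epsilon$ strictly decreases.

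\textbf{Hexagons and octagons.} Here the proposal contains no argument yet. No deformation is specified. The decompositions you mention (three rhombi meeting at the centre of the hexagon; a square with rectangles and triangles inside the octagon) have vertices that are not cone points, so they are not triangulations by saddle connections.

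Your proposed resolution of the main obstacle is also false. A saddle connection is a single straight segment crossing edges transversally; only its holonomy is an integer combination of edge holonomies, and cancellation can make it shorter than every edge. For example, take the torus glued from a rhombus of angle $\frac{\pi}{6}$, triangulated by its long diagonal. All edges have length $\ge1$, yet the short diagonal is a saddle connection of length $2\sin\frac{\pi}{12}<1$. What is true is that a shortest saddle connection is an edge of the Delaunay decomposition, so you would have to keep your triangulation Delaunay along the path.

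The missing idea is the paper's reduction to the square case. Each polygon undergoes a centrally symmetric hinge motion with unit sides:
\begin{itemize}
\item hexagon: side directions $0,\alpha,2\alpha$, with $\alpha$ going from $\frac{\pi}{3}$ to $\frac{\pi}{2}$ and $e_{j+3}=-e_j$;
\item octagon: side directions $0,x,\frac{\pi}{2},\pi-x$, with $x$ going from $\frac{\pi}{4}$ to $0$ and $e_{j+4}=-e_j$.
\end{itemize}
Opposite sides stay parallel and of length $1$, so every gluing survives. The endpoints are the boundaries of $2\times1$ and $3\times1$ rectangles, so the final surface is square-tiled. The areas $2\sin\alpha(1+\cos\alpha)$ and $1+2\sin x+2\cos x+\sin 2x$ decrease strictly along the path.

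The paper asserts, without proof, that the systole stays $1$ along these paths. This does hold: the polygons are convex, and every vertex is at distance $\ge1$ from each side not containing it. Hence no segment issued from a vertex can exit the polygon, or reach another vertex, within length $<1$. Concatenating this path with the square case finishes the proof.
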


To prove this result, we first consider square-tiled surfaces in the class $\mathcal{P}$ and explicitly construct the desired deformation to a maximal surface. For surfaces in the classes $\mathcal{Q}$ and $\mathcal{R}$, we first construct systole-increasing deformations to square-tiled surfaces and then apply the deformation constructed for the class $\mathcal{P}$ to reach a maximal surface.

Furthermore, using this result, we find another family of translation surfaces in a fixed stratum for which one can construct a systole-increasing deformation analogous to those for the families $\mathcal{P}$, $\mathcal{Q}$, and $\mathcal{R}$ (see Theorem~\ref{thm:polygonal_deformation}).

\section{Background}
\begin{definition}\label{Translation Surface} (Masur \cite{Masur06})
A \emph{translation surface} is a finite union of Euclidean polygons $\{ \triangle_1, \triangle_2, \dots,\triangle_n \}$, equipped with a side pairing,
 such that:
\begin{enumerate}
    \item The boundary of every polygon is oriented so that the interior of the polygon lies to the left.
    \item For every $1\leq j\leq n$ and every oriented side $s_j$ of $\triangle_j$, there exists $k\in\{1,2,\cdots,n\}$ and an oriented side $s_k$ of $\triangle_k$ such that $s_j$ and $s_k$ are parallel and of the same length. They are glued together in the opposite orientation by a parallel translation.
\end{enumerate}
\end{definition}

A translation surface is obtained by identifying pairs of parallel sides of Euclidean polygons via translations. The vertices of these polygons are thereby identified to form cone points (or singularities), and the total angle around each singularity is an integer multiple of $2\pi.$

\begin{definition}
Let $X$ be a translation surface. A point of $X$ corresponding to an equivalence class of vertices of the defining polygons is called
\begin{enumerate}
    \item a \emph{singular point} if its cone angle is greater than $2\pi$;
    \item a \emph{marked point} if its cone angle is equal to $2\pi$.
\end{enumerate}
\end{definition}

\begin{definition}
A \emph{saddle connection} on a translation surface is a geodesic or straight line segment joining two singular points (possibly the same) whose interior contains no singular points.
\end{definition}

A saddle connection of minimal length is called a \emph{systolic connection}.

\begin{note}
If a translation surface has no singular points, then marked points replace singular points in the above definition.
\end{note}

\begin{definition}[Stratum]
Let \(k_1,\ldots,k_n\) be non-negative integers. 

The \emph{stratum} \(\mathcal{H}(k_1,\ldots,k_n)\) is the set of all translation surfaces of a fixed genus
having exactly \(n\) singular points with cone angles
\(2\pi(k_i+1)\), \(i=1,\ldots,n\).
\end{definition}

\begin{definition}[Square-tiled surface]
A \emph{square-tiled surface} is a translation surface obtained by gluing together finitely many squares along pairs of parallel edges via translations.
\end{definition}

For example, see Figure~\ref{square-tiled surface}.

\begin{figure}[htbp]
\begin{center}
\begin{tikzpicture}[scale=1.5]
\draw (0,0)--(2,0)--(2,1)--(1,1)--(1,2)--(0,2)--(0,0);
\draw[red] (1,2) to (0,2);
\draw[red] (0,0) to (1,0);
\draw[green] (0,2) to (0,1);
\draw[green] (1,2) to (1,1);
\draw[blue] (0,1) to (0,0);
\draw[blue] (2,0) to (2,1);
\draw[yellow] (2,1) to (1,1);
\draw[yellow] (2,0) to (1,0);
\draw[dashed] (0,1) to (1,1);
\draw[dashed] (1,1) to (1,0);
\draw[red] (0,0) node{\tiny$\bullet$};
\draw[red] (1,0) node{\tiny$\bullet$};
\draw[red] (2,0) node{\tiny$\bullet$};
\draw[red] (2,1) node{\tiny$\bullet$};
\draw[red] (1,1) node{\tiny$\bullet$};
\draw[red] (0,1) node{\tiny$\bullet$};
\draw[red] (0,2) node{\tiny$\bullet$};
\draw[red] (1,2) node{\tiny$\bullet$};
\draw [-{Stealth[red]}]
(0.5,0)--(0.51,0);
\draw [-{Stealth[red]}]
(0.5,2)--(0.51,2);
\draw (0.5,-0.15) node{a};
\draw (0.5,2.15) node{a};
\draw [-{Stealth[blue]}]
(0,0.5)--(0,0.51);
\draw [-{Stealth[blue]}]
(2,0.5)--(2,0.51);
\draw (-0.15,0.5) node{b};
\draw (2.15,0.5) node{b};
\draw [-{Stealth[green]}]
(0,1.5)--(0,1.51);
\draw [-{Stealth[green]}]
(1,1.5)--(1,1.51);
\draw (-0.15,1.5) node{c};
\draw (1.15,1.51) node{c};
\draw [-{Stealth}]
(1.5,0)--(1.51,0);
\draw [-{Stealth}]
(1.5,1)--(1.51,1);
\draw (1.5,-0.15) node{d};
\draw (1.5,1.15) node{d};
 \end{tikzpicture}
\end{center}
\caption{$3$ square-tiled genus $2$ surface.} \label{square-tiled surface}
\end{figure}

\begin{definition}[Maximal translation surface]
Let $\mathcal{H}(k_1,\dots,k_n)$ be a stratum of unit-area translation surfaces. 
For a surface $X \in \mathcal{H}(k_1,\dots,k_n)$, define the \emph{systole} by

$$\mathrm{sys}(X) = \text{length of a systolic connection on } X $$

We say that $X$ is a \emph{maximal translation surface} if $$\mathrm{sys}(X) \ge \mathrm{sys}(Y)\ \text{for all}\ Y \in \mathcal{H}(k_1,\dots,k_n).$$

\end{definition}

The characterization of such surfaces is completely described in \cite{Bo21} by the following result.

\begin{theorem}[\cite{Bo21}] \label{maximal surfaces}
Let X be a translation surface of genus $g \geq 1$ and area one, with $r > 0$ number of singular points (or marked points). Then
\[
\mathrm{sys}(X) \leq \left( \frac{2}{\sqrt{3}(2g - 2 + r)} \right)^{\frac{1}{2}}.
\]
Equality holds if and only if X is obtained by gluing equilateral triangles whose sides are saddle connections of length $\mathrm{sys}(X)$. 

Moreover, such a surface exists in every connected component of every stratum.
\end{theorem}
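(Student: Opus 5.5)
The proof has three parts: the inequality via Delaunay triangulation and Euler characteristic, the equality case by tracing where the inequalities are sharp, and existence via a sheared square-tiled surface.

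\textbf{Setting up the count.} Let $s=\mathrm{sys}(X)$ and take a Delaunay triangulation $\mathcal{T}$ of $X$ whose vertex set is the set of $r$ singular (or marked) points. Each edge of $\mathcal{T}$ is a saddle connection, so it has length at least $s$. From $V-E+F=2-2g$, $V=r$ and $3F=2E$ we get
\[
F=2(2g-2+r).
\]
It therefore suffices to show $\operatorname{Area}(X)\ge F\cdot\frac{\sqrt3}{4}s^2$, that is, that the average area of a Delaunay triangle is at least that of an equilateral triangle of side $s$. With $\operatorname{Area}(X)=1$ this rearranges to the stated bound.

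\textbf{The area estimate (main obstacle).} A single Delaunay triangle with sides $\ge s$ can have very small area: an obtuse triangle with sides $s,s,\approx 2s$ is an example. So the estimate cannot be done triangle by triangle.
\begin{itemize}
\item I would follow Fejes T\'oth's packing argument. The open disks of radius $s/2$ centred at the vertices are embedded and pairwise disjoint, since two points closer than $s$ would give a shorter saddle connection.
\item The disk sectors inside the triangles carry total area $F\cdot\frac{\pi}{8}s^2$, namely angle $\pi$ per triangle times $\frac12(s/2)^2$.
\item I would then prove that the packing density $\frac{\pi}{8}s^2\cdot F/\operatorname{Area}(X)$ is at most $\pi/\sqrt{12}$. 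To do this, decompose each triangle into the three isosceles pieces obtained by joining its circumcentre to the vertices (with signed areas for obtuse triangles). Then regroup the pieces by edges: for each edge $e$, combine the two pieces from the two adjacent triangles.
\item The empty-circumdisk property says the angles opposite $e$ sum to at most $\pi$. Together with the constraint that $e$ and the other edges have length $\ge s$, this should let each edge-pair be bounded below in area by $\frac{1}{3}\cdot\frac{\sqrt3}{4}s^2$ per incident triangle, i.e. $\frac{\sqrt3}{6}s^2$ per edge.
\item Summing over the $E=\frac32F$ edges gives $\operatorname{Area}(X)\ge \frac{\sqrt3}{4}s^2F$.
\end{itemize}
Checking this edge-pair inequality carefully, especially where a circumcentre lies outside its triangle, is the step I expect to be hardest. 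If it fails, I would instead bound the area of each Voronoi cell around a cone point of angle $2\pi(k+1)$ using the embedded disk of radius $s/2$.

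\textbf{Equality.} Suppose equality holds. Then every inequality in the argument is sharp, so every Delaunay triangle must be equilateral with side $s$, and each of its edges is a saddle connection of length $s=\mathrm{sys}(X)$. Conversely, suppose $X$ is glued from equilateral triangles of side $s$ whose vertices are the cone points. Developing along any straight segment shows that its endpoints lie on a triangular lattice of spacing $s$ in each chart. Hence every saddle connection has length at least $s$, and the area count gives equality.

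\textbf{Existence in every component.} I would take a square-tiled surface in the given component with exactly $N=2g-2+r$ squares, every square corner being a singular or marked point. Apply the linear map sending the unit square to a $60^\circ$ rhombus, and cut each rhombus along its short diagonal into two equilateral triangles. The result is $2N=F$ equilateral triangles with vertices at the cone points, so after rescaling to unit area it is maximal by the equality case. What remains is to produce such an origami in each connected component, including the hyperelliptic and spin components. I would do this by an explicit one-cylinder construction in each case, and this is the second place where real work is needed.
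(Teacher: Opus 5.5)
The paper gives no proof of this statement; it is quoted from \cite{Bo21}, so I am judging your argument on its own. Your reduction is sound: Delaunay edges are saddle connections, $F=2(2g-2+r)$, and the claim is equivalent to $\operatorname{Area}(X)\ge F\cdot\frac{\sqrt3}{4}s^2$. Your converse direction of the equality case is also fine. The genuine gap is the area estimate, which carries all the content: you do not prove it, and the per-edge mechanism you propose for it is false. On an edge of length $a$ with opposite angles $A,A'$, the two circumcentric pieces have combined signed area $\frac{a^2}{4}(\cot A+\cot A')=\frac{a^2\sin(A+A')}{4\sin A\sin A'}$, which vanishes whenever $A+A'=\pi$. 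This already happens on the square torus with one marked point ($s=1$). The Delaunay cell is the unit square, and a Delaunay triangulation cuts it along a diagonal of length $\sqrt2$. Both angles opposite the diagonal equal $\pi/2$, so the diagonal's edge-pair has area $0<\frac{\sqrt3}{6}$, while each side carries $\frac12$. The same happens for the diagonals of every square-tiled surface in the paper. So no per-edge bound of the kind you describe holds, and you would need a non-local redistribution that you have not supplied. Your fallback does not close the gap either. An embedded disk of radius $s/2$ at a cone point of angle $2\pi(k+1)$ only yields area $(k+1)\frac{\pi}{4}s^2$, short of the needed $(k+1)\frac{\sqrt3}{2}s^2$. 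The missing factor $2\sqrt3/\pi$ is precisely Fejes T\'oth's hexagon lemma, which you would have to prove for cone points. Your ``every inequality is sharp'' equality argument inherits this gap.

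The missing idea that makes the estimate work triangle by triangle is to saturate first. If some point $c$ is at distance $\ge s$ from all singular and marked points (for instance the circumcentre of a Delaunay cell of circumradius $\ge s$), add it as a marked point. This keeps every saddle connection of length $\ge s$:
\begin{itemize}
\item a new saddle connection from $c$ to another point has length $\ge d(c,\Sigma)\ge s$;
\item a loop at the regular point $c$ is a closed geodesic in a cylinder of $X$, whose boundary saddle connections are no longer than the loop.
\end{itemize}
The process terminates because the embedded disks of radius $s/2$ stay disjoint. It only increases $r$, which strengthens the inequality. On the saturated surface every Delaunay triangle has sides $\ge s$ and circumradius $R<s$. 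The law of sines then gives $\sin\theta>\frac12$ for every angle, i.e.\ $\theta\in(\pi/6,5\pi/6)$. Hence the largest angle lies in $[\pi/3,2\pi/3)$, and the triangle has area at least $\frac12 s^2\cdot\frac{\sqrt3}{2}$. Summing over the $2(2g-2+r')$ triangles gives the inequality, and equality forces $r'=r$ and every triangle equilateral of side $s$.

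Your existence step is still only a plan. It can be completed using the known fact that every connected component contains a one-cylinder surface (Zorich's explicit Jenkins--Strebel representatives). Such a surface has exactly $\sum_i(k_i+1)=2g-2+r$ horizontal saddle connections, each appearing once on each boundary of the cylinder. So you can set all their lengths and the height to $1$, with zero twist, without leaving the component, which gives the minimal origami you need.
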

\begin{figure}[htbp]
\begin{center}
\begin{tikzpicture}[scale=2]
\draw (0,0)--(1,0)--(2,0)--(3,0)--(3.5,0.866025403)--(2.5,0.866025403)--(0.5, 0.866025403)--(0,0);
\draw (0.5,0.866025403)--(1,0)--(1.5,0.866025403)--(2,0)--(2.5,0.866025403)--(3,0);
\draw [-{Stealth[blue]}]
(2.5,0)--(2.51,0);
\draw [-{Stealth[blue]}]
(1,0.866025403)--(1.1,0.866025403);
\draw (2.5,-0.15) node{a};
\draw (1,1.016025404) node{a};
\draw [-{Stealth[green]}]
(1.5,0)--(1.51,0);
\draw [-{Stealth[green]}]
(2,0.866025403)--(2.1,0.866025403);
\draw (1.5,-0.15) node{b};
\draw (2,1.016025404) node{b};
\draw [-{Stealth[red]}]
(0.5,0)--(0.51,0);
\draw [-{Stealth[red]}]
(3,0.866025403)--(3.1,0.866025403);
\draw (0.5,-0.15) node{c};
\draw (3,1.016025404) node{c};

\draw [-{Stealth}]
(0.25,0.433012701)--(0.26,0.45033321);
\draw [-{Stealth}]
(3.25,0.433012701)--(3.26,0.45033321);
\draw (0.10,0.433012701) node{d};
\draw (3.40,0.433012701) node{d};

\draw[red] (0,0) to (1,0);
\draw[red] (2.5,0.866025403) to (3.5,0.866025403);
\draw[green] (1.5, 0.866025403) to (2.5, 0.866025403);
\draw[green] (1,0) to (2,0);
\draw[blue] (0.5,0.866025403) to (1.5,0.866025403);
\draw[blue] (2,0) to (3,0);
\draw[yellow] (0,0) to (0.5,0.866025403);
\draw[yellow] (3,0) to (3.5, 0.866025403);
\draw[red] (0,0) node{\tiny$\bullet$};
\draw[red] (1,0) node{\tiny$\bullet$};
\draw[red] (2,0) node{\tiny$\bullet$};
\draw[red] (3,0) node{\tiny$\bullet$};
\draw[red] (3.5, 0.866025403) node{\tiny$\bullet$};
\draw[red] (2.5,0.866025403) node{\tiny$\bullet$};
\draw[red] (1.5,0.866025403) node{\tiny$\bullet$};
\draw[red] (0.5,0.866025403) node{\tiny$\bullet$};
\end{tikzpicture}
\end{center} 
\caption{Maximal surface of genus $2$ in $\mathcal{H}(2)$} \label{maxima}
\end{figure}
For example, a maximal surface of genus $2$ with one singular point is shown in Figure~\ref{maxima}.

\begin{note}
Throughout this paper, \(\mathcal{H}(k_1,\dots,k_n)\) denotes the stratum of unit-area translation surfaces.
\end{note}

\section{Deformations of Non-Maximal Translation Surfaces}

In this section, we construct explicit continuous systole-increasing deformations for surfaces in the classes $\mathcal{P}$, $\mathcal{Q}$, and $\mathcal{R}$ within a fixed stratum. These constructions establish our main result, Theorem~\ref{main result 1}, which we restate below for the reader's convenience.

\SquareTiledTheorem*

\begin{proof}
We consider the following three cases, according to the class to which $X_{{reg}}$ belongs.

\textbf{Case 1: $X_{{reg}} \in \mathcal{P}$.} 

Let us suppose $X_{{reg}}$ be a square-tiled surface built from unit squares 
$S_1,\dots,S_r$. Then the equivalent class of the vertices of the squares is singular points of $X_{{reg}}$, and each side of a square represent a systolic connection of length~$1$.  
Thus, the systole of $X_{{reg}}$ equal to~$1$, and its area is $\mathrm{area}(X_{{reg}})=r$.

We now introduce a one-parameter deformation of each square $S_i$ into a rhombus.  
For $\epsilon \in \left[0,\frac{\pi}{6}\right]$, let $S_\epsilon$ be the rhombus of side length $1$ with interior angles $ \frac{\pi}{2}-\epsilon, \frac{\pi}{2}+\epsilon, \frac{\pi}{2}-\epsilon~\text{and} \ \frac{\pi}{2}+\epsilon $. The concrete realization of $S_\epsilon$ is given by the vertices $A=(0,0),  B=(1,0),  C=(1+\sin\epsilon,\cos\epsilon) \ \text{and}\ D=(\sin\epsilon,\cos\epsilon).$ Equivalently, $S_\epsilon$ is the parallelogram spanned by the vectors $(1,0)$ and $(\sin\epsilon,\cos\epsilon)$ (see Figure~\ref{rhombus}). The height of $S_\epsilon$ is $\sin\!\left(\tfrac{\pi}{2}-\epsilon\right)=\cos\epsilon.$
Thus, the area of \(S_\varepsilon\) is = $\text{(height)} \times \text{(side length)}
= \cos\epsilon.$
Next we compute the length of the diagonal $d_\epsilon$ of $S_\epsilon$ opposite the angle $\frac{\pi}{2}-\epsilon$.  
By the law of cosines,
\[
d_\epsilon^{\,2}
=1^2+1^2-2\cdot 1\cdot 1\cdot \cos\!\left(\tfrac{\pi}{2}-\epsilon\right)
=2-2\sin\epsilon .
\]
Since $\sin\epsilon$ is strictly increasing on $[0,\tfrac{\pi}{2}]$, the expression
$(2-2\sin\epsilon)$ is strictly decreasing there.  
In particular, at $\epsilon=\frac{\pi}{6}$ we have $\sin(\frac{\pi}{6})=\frac12$, and hence $d_{\pi/6}=1$.  
Thus $S_{\pi/6}$ becomes a rhombus decomposed into two equilateral triangles.
Now replace each square $S_i$ in $X_{{reg}}$ by a copy $S_\epsilon^i$ of $S_\epsilon$, gluing their sides according to the same pattern used in $X_{{reg}}$.  
This produces a translation surface $(X_{{reg}})_\epsilon$ in the same stratum $\mathcal{H}(k_1,\dots,k_n)$ (For instance, see Figure~\ref{sq-tiled deform} (b)), and each systolic connection coming from a side of a rhombus still has length~$1$.  
Since $\mathrm{area}(S_i^\epsilon)=\cos\epsilon$ for $\epsilon\in\left[0,\frac{\pi}{6}\right]$, we have
\[
\mathrm{area}((X_{{reg}})_{\epsilon})= r\cos\epsilon > r\cos\epsilon'= \mathrm{area}((X_{{reg}})_{\epsilon'})
\]
for $\epsilon<\epsilon'\in[0,\frac{\pi}{6}]$.
After normalizing both $(X_{{reg}})_\epsilon$ and $(X_{{reg}})_{\epsilon'}$ to have area~$1$, their systoles become
$\frac{1}{\sqrt{\mathrm{area}((X_{{reg}})_\epsilon)}}$ and $\frac{1}{\sqrt{\mathrm{area}((X_{{reg}})_{\epsilon'})}},$ respectively.
Since $\mathrm{area}((X_{{reg}})_\epsilon)> \mathrm{area}((X_{{reg}})_{\epsilon'}) $, it follows that
\[
\mathrm{sys}((X_{{reg}})_\epsilon) < \mathrm{sys}((X_{{reg}})_{\epsilon'})
\]
for $\epsilon<\epsilon'$.
Hence, the deformation is systole-increasing.

Define
\[
\gamma':[0,\tfrac{\pi}{6}] \longrightarrow \mathcal{H}(k_1,\dots,k_n),
\ \text{by} \
\gamma'(\epsilon)=(X_{{reg}})_\epsilon.
\]
The construction varies continuously with $\epsilon$, so $\gamma'$ is continuous.  
Moreover, the above monotonicity shows that $\mathrm{sys}(\mathcal{\gamma'}(\epsilon))$ is strictly increasing in~$\epsilon$.
At $\epsilon=\frac{\pi}{6}$, each rhombus has an acute angle of $\frac{\pi}{3}$ and therefore decomposes naturally into a pair of equilateral triangles.  
By the characterization of maximal surfaces stated in Theorem~\ref{maximal surfaces}, a translation surface tiled by equilateral triangles is maximal.  
Therefore $\gamma'(\frac{\pi}{6})$ is a maximal surface in $\mathcal{H}(k_1,\dots,k_n)$. For instance, applying this deformation to the initial surface in Figure~\ref{sq-tiled deform}(a) yields a maximal surface in $\mathcal{H}(1,1)$ illustrated in Figure~\ref{sq-tiled deform}(c). 

Now define $\phi:[0,1]\longrightarrow\left[0,\frac{\pi}{6}\right],
\ \text{by}\
\phi(t)=\frac{\pi}{6}t,$
and 
\[
\gamma:[0,1]\longrightarrow\mathcal{H}(k_1,\dots,k_n),
\ \text{by}\
\gamma(t) = (\gamma' \circ \phi)(t).
\]
Since both $\phi$ and $\gamma'$ are continuous, $\gamma$ is continuous. Moreover, $\phi$ is strictly increasing, so the monotonicity of the systole is preserved. Hence
\begin{enumerate}
    \item $\gamma(0)=X_{{reg}}$ and $\gamma(1)$ is a maximal surface, and
    \item $\operatorname{sys}\left(\gamma\left(t'\right)\right)
    <
    \operatorname{sys}\left(\gamma\left(t\right)\right),
    \ \text{for all } 0\le t'<t\le1.$
\end{enumerate}

This proves the result for case $1$.

\begin{figure}[htbp]
\centering
\begin{tikzpicture}[scale=3]
\draw (0,0) -- (1,0) -- (1.4,1) -- (0.4,1) -- cycle;

\draw[red, thick, dotted] (0.4,1) -- (1,0);
\draw (0.82,0.5) node {$d_\epsilon$};

\draw[blue, thick, dotted] (0.4,1) -- (0.4,0);
\draw (0.4,0.5) node[right] {\tiny$\cos(\epsilon)$};
\draw (0.4,0.05) -- (0.45,0.05) -- (0.45,0);

\draw (0.25, 0.1) node {\tiny$\frac{\pi}{2}-\epsilon$};
\draw (0.8, 0.12) node {\tiny$\frac{\pi}{2}+\epsilon$}; 
\draw (0.6, 0.9) node {\tiny$\frac{\pi}{2}+\epsilon$};
\draw (1.2, 0.9) node {\tiny$\frac{\pi}{2}-\epsilon$};
\end{tikzpicture}
\caption{$S_\epsilon$} 
\label{rhombus}
\end{figure}

\begin{figure}[htbp]
\centering
\begin{tikzpicture}[scale=1.1]

\begin{scope}
\draw (0,0)--(2,0)--(2,2)--(1,2)--(1,0);
\draw (0,0)--(0,1)--(3,1)--(3,2)--(2,2);

\draw[red] (3,2)--(2,2) node[midway, above] {$e_1$};
\draw[red] (2,1)--(3,1) node[midway, below] {$e_1$};

\draw[green!70!black] (1,2)--(2,2) node[midway, above] {$e_2$};
\draw[green!70!black] (1,0)--(2,0) node[midway, below] {$e_2$};

\draw[blue] (0,1)--(1,1) node[midway, above] {$e_3$};
\draw[blue] (0,0)--(1,0) node[midway, below] {$e_3$};

\draw[brown] (0,0)--(0,1) node[midway, left] {$e_4$};
\draw[brown] (2,0)--(2,1) node[midway, right] {$e_4$};

\draw[yellow!80!black] (1,1)--(1,2) node[midway, left] {$e_5$};
\draw[yellow!80!black] (3,1)--(3,2) node[midway, right] {$e_5$};

\draw (0,0) node{\tiny$\bullet$};
\draw (0,1) node{\tiny$\bullet$};
\draw (2,2) node{\tiny$\bullet$};
\draw (2,1) node{\tiny$\bullet$};
\draw (2,0) node{\tiny$\bullet$};

\draw[red] (1,2) node{\tiny$\bullet$};
\draw[red] (1,1) node{\tiny$\bullet$};
\draw[red] (1,0) node{\tiny$\bullet$};
\draw[red] (3,1) node{\tiny$\bullet$};
\draw[red] (3,2) node{\tiny$\bullet$};
\draw (1.5,-0.8) node{(a)};

\end{scope}

\begin{scope}[shift={(4.5,0)}]

\draw (1,1)--(2,1);

\draw[red] (2.2,2)--(3.2,2) node[midway, above] {$e_1$};
\draw[red] (2,1)--(3,1) node[midway, below] {$e_1$};

\draw (1,1)--(0.8,0);

\draw[green!70!black] (1.2,2)--(2.2,2) node[midway, above] {$e_2$};
\draw[green!70!black] (0.8,0)--(1.8,0) node[midway, below] {$e_2$};

\draw (2,1)--(2.2,2);

\draw[blue] (0,1)--(1,1) node[midway, above] {$e_3$};
\draw[blue] (-0.2,0)--(0.8,0) node[midway, below] {$e_3$};

\draw[brown] (-0.2,0)--(0,1) node[midway, left] {$e_4$};
\draw[brown] (1.8,0)--(2,1) node[midway, right] {$e_4$};

\draw[yellow!80!black] (1,1)--(1.2,2) node[midway, left] {$e_5$};
\draw[yellow!80!black] (3,1)--(3.2,2) node[midway, right] {$e_5$};

\draw (-0.2,0) node{\tiny$\bullet$};
\draw (0,1) node{\tiny$\bullet$};
\draw (2.2,2) node{\tiny$\bullet$};
\draw (2,1) node{\tiny$\bullet$};
\draw (1.8,0) node{\tiny$\bullet$};

\draw[red] (1.2,2) node{\tiny$\bullet$};
\draw[red] (1,1) node{\tiny$\bullet$};
\draw[red] (0.8,0) node{\tiny$\bullet$};
\draw[red] (3,1) node{\tiny$\bullet$};
\draw[red] (3.2,2) node{\tiny$\bullet$};

\draw (1.5,-0.8) node{(b)};

\end{scope}

\begin{scope}[shift={(9,0)}]

\draw (1, 0.866)--(2, 0.866);
\draw (0.5, 0)--(1, 0.866);
\draw (2, 0.866)--(2.5, 1.732);

\draw[dashed, gray, thick] (0, 0.866)--(0.5, 0);
\draw[dashed, gray, thick] (1, 0.866)--(1.5, 0);
\draw[dashed, gray, thick] (1.5, 1.732)--(2, 0.866);
\draw[dashed, gray, thick] (2.5, 1.732)--(3, 0.866);

\draw[red] (2.5, 1.732)--(3.5, 1.732) node[midway, above] {$e_1$};
\draw[red] (2, 0.866)--(3, 0.866) node[midway, below] {$e_1$};

\draw[green!70!black] (1.5, 1.732)--(2.5, 1.732) node[midway, above] {$e_2$};
\draw[green!70!black] (0.5, 0)--(1.5, 0) node[midway, below] {$e_2$};

\draw[blue] (0, 0.866)--(1, 0.866) node[midway, above] {$e_3$};
\draw[blue] (-0.5, 0)--(0.5, 0) node[midway, below] {$e_3$};

\draw[brown] (-0.5, 0)--(0, 0.866) node[midway, left] {$e_4$};
\draw[brown] (1.5, 0)--(2, 0.866) node[midway, right] {$e_4$};

\draw[yellow!80!black] (1, 0.866)--(1.5, 1.732) node[midway, left] {$e_5$};
\draw[yellow!80!black] (3, 0.866)--(3.5, 1.732) node[midway, right] {$e_5$};

\draw (-0.5, 0) node{\tiny$\bullet$};
\draw (0, 0.866) node{\tiny$\bullet$};
\draw (2.5, 1.732) node{\tiny$\bullet$};
\draw (2, 0.866) node{\tiny$\bullet$};
\draw (1.5, 0) node{\tiny$\bullet$};

\draw[red] (1.5, 1.732) node{\tiny$\bullet$};
\draw[red] (1, 0.866) node{\tiny$\bullet$};
\draw[red] (0.5, 0) node{\tiny$\bullet$};
\draw[red] (3, 0.866) node{\tiny$\bullet$};
\draw[red] (3.5, 1.732) node{\tiny$\bullet$};

\draw (1.5, -0.8) node{(c)};

\end{scope}

\end{tikzpicture}
\caption{(a) $X_{reg}\in\mathcal{P}$, (b) $(X_{reg})_\epsilon$, and (c) Maximal surface in $\mathcal{H}(1,1)$.}
\label{sq-tiled deform}
\end{figure}

\textbf{Case 2: $X_{\mathrm{reg}} \in \mathcal{Q}$.}

Before proceeding with this case, we establish the following proposition and lemmas, which will be used in the proof.

\begin{proposition}\label{hexa deform}
There exists a continuous family of equilateral hexagons $\{H_t\}_{t\in[0,1]}$, each of side length $1$, such that $H_0$ is a regular hexagon and $H_1$ is the outer boundary of two adjacent unit squares.
\end{proposition}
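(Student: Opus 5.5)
We will build the family explicitly from edge vectors. An equilateral hexagon whose opposite sides are parallel (which is what we need to keep the same translation gluing pattern) is determined by three unit vectors $u_1,u_2,u_3$ with increasing arguments in an interval of length less than $\pi$. Its boundary is traversed by the edge vectors $u_1,u_2,u_3,-u_1,-u_2,-u_3$, which automatically sum to zero, so the hexagon closes up. We write $u_j=(\cos\theta_j,\sin\theta_j)$.

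The regular hexagon corresponds to $(\theta_1,\theta_2,\theta_3)=(0,\pi/3,2\pi/3)$. The outer boundary of two horizontally adjacent unit squares is traversed by the edges $(1,0),(1,0),(0,1),(-1,0),(-1,0),(0,-1)$. These are exactly the edge vectors obtained from $(\theta_1,\theta_2,\theta_3)=(0,0,\pi/2)$, with the two collinear sides at the top and bottom being the degenerate "angle $\pi$" vertices.

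We therefore define
\[
\theta_1(t)=0,\qquad \theta_2(t)=\tfrac{\pi}{3}(1-t),\qquad \theta_3(t)=\tfrac{2\pi}{3}-\tfrac{\pi}{6}t,
\]
and let $H_t$ be the hexagon with vertices at the partial sums of $u_1,u_2,u_3,-u_1,-u_2,-u_3$ starting from the origin. By construction each side has length $1$, opposite sides are parallel and of equal length, and the vertices depend continuously on $t$. At $t=0$ we get the regular hexagon, and at $t=1$ we get the two-square boundary.

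The remaining step is to check that $H_t$ is a genuine simple (convex) hexagon for every $t$. For $t\in[0,1)$ we have
\[
0=\theta_1<\theta_2<\theta_3<\pi .
\]
The six edge directions $\theta_1,\theta_2,\theta_3,\theta_1+\pi,\theta_2+\pi,\theta_3+\pi$ are therefore strictly increasing in $[0,2\pi)$. A closed polygon whose edge directions turn monotonically through exactly $2\pi$ is convex and simple, so each $H_t$ is a convex equilateral centrally symmetric hexagon.

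The main obstacle is the endpoint $t=1$, where $\theta_1=\theta_2$ and two interior angles become $\pi$. There $H_1$ is still convex and simple, but as a hexagon it has two flat vertices. We interpret this as the boundary of the $2\times1$ rectangle with midpoints of the long sides marked, which is exactly what the statement means by "outer boundary of two adjacent unit squares." If a strictly convex family is preferred, the same formulas still work on $[0,1)$, and continuity of vertex positions gives convergence to $H_1$.

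Finally, we note that continuity here is in terms of the vertex coordinates. This is what later lets the gluings of hexagons deform continuously in the stratum $\mathcal{H}(k_1,\dots,k_n)$, as in Case 1.
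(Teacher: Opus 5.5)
Your proof is correct and follows the paper's approach. Like the paper, you parametrize the centrally symmetric equilateral hexagon by three unit edge vectors with $e_{j+3}=-e_j$ and interpolate their angles linearly. The only difference is the endpoint: the paper sends $\theta_2\to\pi/2$ and $\theta_3\to\pi$, giving the rectangle $[-1,1]\times[0,1]$, whereas you collapse $\theta_2\to\theta_1$. In fact your turning angles $(\tfrac{\pi}{3}-\tfrac{\pi t}{3},\,\tfrac{\pi}{3}+\tfrac{\pi t}{6},\,\tfrac{\pi}{3}+\tfrac{\pi t}{6})$ are a cyclic shift of the paper's, so your $H_t$ is congruent to the paper's for every $t$ and the paper's later area lemma carries over unchanged. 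Your explicit check that every $H_t$ is convex and simple is a small addition the paper omits.
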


\begin{proof}
We explicitly construct the desired continuous family of equilateral hexagons of side length $1$ via a hinge motion of six unit segments. We define the continuous angle functions $\theta_j(t)$ for $t \in [0, 1]$ corresponding to the first three sides:
\begin{align*}
    \theta_1(t) &= 0, \\
    \theta_2(t) &= (1-t)\frac{\pi}{3} + t\frac{\pi}{2}, \\
    \theta_3(t) &= (1-t)\frac{2\pi}{3} + t\pi.
\end{align*}
At $t=0$, the angles $0$, $\pi/3$, and $2\pi/3$ naturally represent the directions of the first three consecutive sides of a regular hexagon.

For $j \in \{1, 2, 3\}$, let the first three side vectors be given by
\[
    e_j(t) = \bigl(\cos\theta_j(t), \sin\theta_j(t)\bigr).
\]
The remaining three side vectors are defined by imposing central symmetry:
\[
    e_{j+3}(t) = -e_j(t) \quad \text{for } j \in \{1, 2, 3\}.
\]
By definition, each $e_i(t)$ is a unit vector. Furthermore, their sum vanishes for all $t$:
\[
    \sum_{i=1}^{6} e_i(t) = \sum_{j=1}^{3} e_j(t) + \sum_{j=1}^{3} \bigl(-e_j(t)\bigr) = 0.
\]
This guarantees that the sequence of vectors forms a closed loop for every $t \in [0, 1]$. 

We define the vertices of the hexagon recursively by setting $P_0(t) = (0,0)$ and 
\[
    P_m(t) = \sum_{i=1}^m e_i(t) \quad \text{for } m = 1, \dots, 6.
\]
Because the vectors sum to zero, $P_6(t) = P_0(t)$. Thus, $H_t = P_0(t)P_1(t)\cdots P_5(t)$ forms a closed, equilateral hexagon with unit side lengths for all $t \in [0, 1]$. For instance, representative members of this continuous family are illustrated in Figures~\ref{fig:Ht-family1} and~\ref{fig:Ht-family2}. Specifically, Figure~\ref{fig:Ht-family1} captures the equilateral hexagons $H_t$ at stages $t=0$, $0.3$, and $0.5$, while Figure~\ref{fig:Ht-family2} displays the later configurations at $t=0.6$, $0.8$, and the final state at $t=1$.

\begin{figure}[htbp]
\centering
\begin{subfigure}[b]{0.32\textwidth}
\centering
\resizebox{\linewidth}{!}{%
\begin{tikzpicture}[line join=round,line cap=round]
\def\alpha{1.04720} 
\def\beta{2.09440}  
\coordinate (P0) at (0,0);
\coordinate (P1) at (1,0);
\coordinate (P2) at ({1+cos(\alpha r)}, {sin(\alpha r)});
\coordinate (P3) at ({1+cos(\alpha r)+cos(\beta r)}, {sin(\alpha r)+sin(\beta r)});
\coordinate (P4) at ({cos(\alpha r)+cos(\beta r)}, {sin(\alpha r)+sin(\beta r)});
\coordinate (P5) at ({cos(\beta r)}, {sin(\beta r)});

\draw[thick] (P0)--(P1)--(P2)--(P3)--(P4)--(P5)--cycle;

\foreach \P/\lab/\pos in {P0/$P_0$/below left, P1/$P_1$/below, P2/$P_2$/right, P3/$P_3$/above, P4/$P_4$/left, P5/$P_5$/above left} {
    \fill (\P) circle (0.03);
    \node[\pos,font=\tiny] at (\P) {\lab};
}
\end{tikzpicture}}
\caption{$t=0$}
\end{subfigure}
\hfill
\begin{subfigure}[b]{0.32\textwidth}
\centering
\resizebox{\linewidth}{!}{%
\begin{tikzpicture}[line join=round,line cap=round]
\def\alpha{1.20428}
\def\beta{2.40855}
\coordinate (P0) at (0,0);
\coordinate (P1) at (1,0);
\coordinate (P2) at ({1+cos(\alpha r)}, {sin(\alpha r)});
\coordinate (P3) at ({1+cos(\alpha r)+cos(\beta r)}, {sin(\alpha r)+sin(\beta r)});
\coordinate (P4) at ({cos(\alpha r)+cos(\beta r)}, {sin(\alpha r)+sin(\beta r)});
\coordinate (P5) at ({cos(\beta r)}, {sin(\beta r)});

\draw[thick] (P0)--(P1)--(P2)--(P3)--(P4)--(P5)--cycle;

\foreach \P/\lab/\pos in {P0/$P_0$/below left, P1/$P_1$/below, P2/$P_2$/right, P3/$P_3$/above, P4/$P_4$/left, P5/$P_5$/above left} {
    \fill (\P) circle (0.03);
    \node[\pos,font=\tiny] at (\P) {\lab};
}
\end{tikzpicture}}
\caption{$t=0.3$}
\end{subfigure}
\hfill
\begin{subfigure}[b]{0.32\textwidth}
\centering
\resizebox{\linewidth}{!}{%
\begin{tikzpicture}[line join=round,line cap=round]
\def\alpha{1.30900}
\def\beta{2.61799}
\coordinate (P0) at (0,0);
\coordinate (P1) at (1,0);
\coordinate (P2) at ({1+cos(\alpha r)}, {sin(\alpha r)});
\coordinate (P3) at ({1+cos(\alpha r)+cos(\beta r)}, {sin(\alpha r)+sin(\beta r)});
\coordinate (P4) at ({cos(\alpha r)+cos(\beta r)}, {sin(\alpha r)+sin(\beta r)});
\coordinate (P5) at ({cos(\beta r)}, {sin(\beta r)});

\draw[thick] (P0)--(P1)--(P2)--(P3)--(P4)--(P5)--cycle;

\foreach \P/\lab/\pos in {P0/$P_0$/below left, P1/$P_1$/below, P2/$P_2$/right, P3/$P_3$/above, P4/$P_4$/left, P5/$P_5$/above left} {
    \fill (\P) circle (0.03);
    \node[\pos,font=\tiny] at (\P) {\lab};
}
\end{tikzpicture}}
\caption{$t=0.5$}
\end{subfigure}

\caption{Hexagon $H_t$ for (A) $t=0$, (B) $t=0.3$, and (C) $t=0.5$.}
\label{fig:Ht-family1}
\end{figure}

\begin{figure}[htbp]
\centering
\begin{subfigure}[b]{0.32\textwidth}
\centering
\resizebox{\linewidth}{!}{%
\begin{tikzpicture}[line join=round,line cap=round]
\def\alpha{1.36136} 
\def\beta{2.72271}  
\coordinate (P0) at (0,0);
\coordinate (P1) at (1,0);
\coordinate (P2) at ({1+cos(\alpha r)}, {sin(\alpha r)});
\coordinate (P3) at ({1+cos(\alpha r)+cos(\beta r)}, {sin(\alpha r)+sin(\beta r)});
\coordinate (P4) at ({cos(\alpha r)+cos(\beta r)}, {sin(\alpha r)+sin(\beta r)});
\coordinate (P5) at ({cos(\beta r)}, {sin(\beta r)});

\draw[thick] (P0)--(P1)--(P2)--(P3)--(P4)--(P5)--cycle;

\foreach \P/\lab/\pos in {P0/$P_0$/below left, P1/$P_1$/below, P2/$P_2$/right, P3/$P_3$/above, P4/$P_4$/left, P5/$P_5$/above left} {
    \fill (\P) circle (0.03);
    \node[\pos,font=\tiny] at (\P) {\lab};
}
\end{tikzpicture}}
\caption{$t=0.6$}
\end{subfigure}
\hfill
\begin{subfigure}[b]{0.32\textwidth}
\centering
\resizebox{\linewidth}{!}{%
\begin{tikzpicture}[line join=round,line cap=round]
\def\alpha{1.46608} 
\def\beta{2.93215}  
\coordinate (P0) at (0,0);
\coordinate (P1) at (1,0);
\coordinate (P2) at ({1+cos(\alpha r)}, {sin(\alpha r)});
\coordinate (P3) at ({1+cos(\alpha r)+cos(\beta r)}, {sin(\alpha r)+sin(\beta r)});
\coordinate (P4) at ({cos(\alpha r)+cos(\beta r)}, {sin(\alpha r)+sin(\beta r)});
\coordinate (P5) at ({cos(\beta r)}, {sin(\beta r)});

\draw[thick] (P0)--(P1)--(P2)--(P3)--(P4)--(P5)--cycle;

\foreach \P/\lab/\pos in {P0/$P_0$/below left, P1/$P_1$/below, P2/$P_2$/right, P3/$P_3$/above, P4/$P_4$/left, P5/$P_5$/above left} {
    \fill (\P) circle (0.03);
    \node[\pos,font=\tiny] at (\P) {\lab};
}
\end{tikzpicture}}
\caption{$t=0.8$}
\end{subfigure}
\hfill
\begin{subfigure}[b]{0.32\textwidth}
\centering
\resizebox{\linewidth}{!}{%
\begin{tikzpicture}[line join=round,line cap=round]
\def\alpha{1.57080} 
\def\beta{3.14159}  
\coordinate (P0) at (0,0);
\coordinate (P1) at (1,0);
\coordinate (P2) at ({1+cos(\alpha r)}, {sin(\alpha r)});
\coordinate (P3) at ({1+cos(\alpha r)+cos(\beta r)}, {sin(\alpha r)+sin(\beta r)});
\coordinate (P4) at ({cos(\alpha r)+cos(\beta r)}, {sin(\alpha r)+sin(\beta r)});
\coordinate (P5) at ({cos(\beta r)}, {sin(\beta r)});

\draw[thick] (P0)--(P1)--(P2)--(P3)--(P4)--(P5)--cycle;

\draw[densely dotted, thick] (P0)--(P3);

\foreach \P/\lab/\pos in {P0/$P_0$/below left, P1/$P_1$/below, P2/$P_2$/right, P3/$P_3$/above, P4/$P_4$/left, P5/$P_5$/above left} {
    \fill (\P) circle (0.03);
    \node[\pos,font=\tiny] at (\P) {\lab};
}
\end{tikzpicture}}
\caption{$t=1$}
\end{subfigure}

\caption{Hexagon $H_t$ for (A) $t=0.6$, (B) $t=0.8$, and (C) $t=1$.}
\label{fig:Ht-family2}
\end{figure}

To verify the endpoints of this deformation, we first evaluate $t=0$. The side directions are $0, \pi/3, 2\pi/3, \pi, 4\pi/3,$ and $5\pi/3$, which precisely define a regular hexagon. Therefore, $H_0$ is a regular hexagon of side length $1$.

Next, we evaluate the endpoint $t=1$. The angles become $\theta_1(1)=0$, $\theta_2(1)=\pi/2$, and $\theta_3(1)=\pi$. The corresponding side vectors are:
\begin{align*}
    e_1(1) &= (1,0),  & e_2(1) &= (0,1),  & e_3(1) &= (-1,0), \\
    e_4(1) &= (-1,0), & e_5(1) &= (0,-1), & e_6(1) &= (1,0).
\end{align*}
Tracing the vertices yields:
\begin{align*}
    P_0(1) &= (0,0),  & P_1(1) &= (1,0),  & P_2(1) &= (1,1), \\
    P_3(1) &= (0,1),  & P_4(1) &= (-1,1), & P_5(1) &= (-1,0),
\end{align*}
closing back at $P_6(1) = (0,0)$. 

Thus, $H_1$ bounds the rectangle $[-1, 1] \times [0, 1]$ (see Figure~\ref{fig:Ht-family2} (C)), composed of two adjacent unit squares, where the top and bottom edges are subdivided into two unit segments. Finally, since $e_{j+3}(t) = -e_j(t)$ for all $t$, opposite sides of the hexagon remain strictly parallel throughout the entire continuous deformation. 

This completes the proof. 
\end{proof}

\begin{lemma}\label{lemma:hexagon-area}
Let $H$ be a centrally symmetric equilateral hexagon with unit side whose consecutive side vectors are $(1,0),\ (\cos\alpha,\sin\alpha),\ (\cos\beta,\sin\beta),\ (-1,0),\ (-\cos\alpha,-\sin\alpha)$ and
$(-\cos\beta,-\sin\beta),$ where $0<\alpha<\beta<\pi$. 

Then
$
\operatorname{area}(H)
=
\sin\alpha+\sin\beta+\sin(\beta-\alpha).
$
\end{lemma}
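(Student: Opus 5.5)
I will compute the area directly with the shoelace formula, in the vertex parametrization already used in the proof of Proposition~\ref{hexa deform}. Write $u=(1,0)$, $v=(\cos\alpha,\sin\alpha)$, $w=(\cos\beta,\sin\beta)$. Starting at $P_0=(0,0)$, the vertices are
\[
P_0=0,\quad P_1=u,\quad P_2=u+v,\quad P_3=u+v+w,\quad P_4=v+w,\quad P_5=w .
\]
Since $0<\alpha<\beta<\pi$, the directions of the six side vectors $0,\alpha,\beta,\pi,\pi+\alpha,\pi+\beta$ are strictly increasing in angle and make one full turn. Hence the hexagon is convex and positively oriented. This justifies using the signed shoelace area and taking it as the actual area.

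Next I expand $2\,\mathrm{area}(H)=\sum_{m=0}^{5} P_m\times P_{m+1}$, where $\times$ is the planar cross product $a\times b=a_1b_2-a_2b_1$. I expect this expansion to be the only real bookkeeping. By bilinearity and $x\times x=0$, each term reduces to a combination of $u\times v$, $v\times w$ and $u\times w$. The nonzero terms are:
\begin{itemize}
\item $P_1\times P_2=u\times v$;
\item $P_2\times P_3=u\times w+v\times w$;
\item $P_3\times P_4=u\times v+u\times w$;
\item $P_4\times P_5=v\times w$;
\item $P_5\times P_0=0$ and $P_0\times P_1=0$.
\end{itemize}
Summing gives $2(u\times v+v\times w+u\times w)$.

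Finally I substitute $u\times v=\sin\alpha$, $u\times w=\sin\beta$ and $v\times w=\cos\alpha\sin\beta-\sin\alpha\cos\beta=\sin(\beta-\alpha)$. This yields the claimed formula.

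As a sanity check, $\alpha=\pi/3$, $\beta=2\pi/3$ gives $3\sqrt3/2$, the area of the regular unit hexagon. As an alternative route, I could cut $H$ along the diagonal $P_0P_3$ into two congruent quadrilaterals, or into the parallelograms spanned by pairs of $u,v,w$. This also gives the area as the sum of the three pairwise parallelogram areas $|u\times v|+|v\times w|+|u\times w|$, all of which are positive under the hypothesis. I would mention this as the geometric explanation of the formula.
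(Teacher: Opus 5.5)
Your proof is correct, but it takes a different route from the paper's proof of this lemma.

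The paper argues geometrically. It tiles $H$ by the three parallelograms spanned by the pairs $(u,v)$, $(u,w)$, $(v,w)$ and adds their areas $|\det(u,v)|+|\det(u,w)|+|\det(v,w)|$. It uses $0<\alpha<\beta<\pi$ only to drop the absolute values. This is essentially the ``alternative route'' you mention at the end.

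Your shoelace computation is purely algebraic. Its only geometric input is that $H$ is simple and counterclockwise, which you correctly deduce from the strictly increasing edge directions with total turning $2\pi$. Your bilinear expansion of the six cross products is also correct.

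What your approach buys is that you never have to check that some decomposition is actually a tiling. The paper handles that point loosely: joining the three pairs of opposite vertices literally gives six triangles meeting at the center. The parallelogram tiling actually comes from joining the interior point $v$ to the alternate vertices $P_0,P_2,P_4$.

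Your method is also the one the paper itself uses for the octagon in Lemma~\ref{lem:octagon-area}. It extends directly to any convex centrally symmetric $2n$-gon, giving area $\sum_{1\le i<j\le n}\det(e_i,e_j)$ in terms of the first $n$ edge vectors. The paper's decomposition, for its part, shows transparently why each pairwise determinant appears exactly once.
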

\begin{proof}
Let \(u=(1,0)\), \(v=(\cos\alpha,\sin\alpha)\), and
\(w=(\cos\beta,\sin\beta)\). The boundary of \(H\) is obtained by traversing the vectors $u,v,w,-u,-v$ and $-w.$ Joining the three pairs of opposite vertices decomposes \(H\) into three parallelograms generated by the pairs of vectors \((u,v)\), \((u,w)\), and \((v,w)\), respectively. Therefore,
\[
\operatorname{area}(H)
=
\operatorname{area}(P_{uv})
+
\operatorname{area}(P_{uw})
+
\operatorname{area}(P_{vw}),
\]
where \(P_{uv}\), \(P_{uw}\), and \(P_{vw}\) denote the corresponding parallelograms. Since the area of a parallelogram spanned by vectors \(a\) and \(b\) is \(|\det(a,b)|\), we have
\[
\operatorname{area}(H)
=
|\det(u,v)|
+
|\det(u,w)|
+
|\det(v,w)|.
\]
Now, $\det(u,v)=\sin\alpha,
\det(u,w)=\sin\beta\ \text{and}\
\det(v,w)=\sin(\beta-\alpha).$
Since \(0<\alpha<\beta<\pi\), all three determinants are positive. Hence,
\[
\operatorname{area}(H)
=
\sin\alpha+\sin\beta+\sin(\beta-\alpha),
\]
which completes the proof.
\end{proof}

\begin{lemma} \label{Mean Value Theorem}
Let $f: [a,b] \to \mathbb{R}$ be a continuous function on the closed interval $[a,b]$ and differentiable on the open interval $(a,b)$.
\begin{enumerate}
    \item If $f'(x) > 0$ for all $x \in (a,b)$, then $f$ is strictly increasing on $[a,b]$.
    \item If $f'(x) < 0$ for all $x \in (a,b)$, then $f$ is strictly decreasing on $[a,b]$.
\end{enumerate}
\end{lemma}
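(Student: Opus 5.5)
The statement is the classical monotonicity consequence of the Mean Value Theorem, and I will argue directly from it. For part (1), I fix arbitrary points $x_1<x_2$ in $[a,b]$. Since $f$ is continuous on $[a,b]$ and differentiable on $(a,b)$, its restriction to $[x_1,x_2]$ is continuous on the closed subinterval and differentiable on $(x_1,x_2)\subseteq(a,b)$. So the Mean Value Theorem applies there and produces a point $c\in(x_1,x_2)$ with
\[
f(x_2)-f(x_1)=f'(c)\,(x_2-x_1).
\]

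Because $c\in(a,b)$, the hypothesis gives $f'(c)>0$, and by choice $x_2-x_1>0$. The product is therefore strictly positive, so $f(x_1)<f(x_2)$. Since $x_1<x_2$ were arbitrary, $f$ is strictly increasing on all of $[a,b]$, endpoints included. This last point matters for the later application, where the endpoints $t=0$ and $t=1$ of the deformation must be covered.

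For part (2) I will not repeat the argument. Instead I apply part (1) to $g=-f$. This $g$ is continuous on $[a,b]$, differentiable on $(a,b)$, and satisfies $g'=-f'>0$ there. Hence $g$ is strictly increasing, which means $f$ is strictly decreasing.

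There is no real obstacle here. The only point needing care is that the Mean Value Theorem is applied on each subinterval $[x_1,x_2]$ rather than on $[a,b]$ alone. Its hypotheses must be checked there, and they hold because continuity and differentiability are inherited by subintervals. In the paper's use, $f$ will presumably be the hexagon area from Lemma~\ref{lemma:hexagon-area} along the family of Proposition~\ref{hexa deform}, and the lemma will convert a sign computation of its derivative into strict monotonicity of the area, and hence of the normalized systole.
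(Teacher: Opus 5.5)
Your proposal is correct and follows the paper's proof: both apply the Mean Value Theorem on an arbitrary subinterval $[x_1,x_2]\subseteq[a,b]$ and read off the sign of $f'(c)(x_2-x_1)$. The only difference is cosmetic: you obtain part (2) by applying part (1) to $g=-f$, whereas the paper repeats the sign argument with $f'(c)<0$.
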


\begin{proof}
We proceed by utilizing the Mean Value Theorem (see Theorem $5.10$, \cite{rudin1976principles}). Let $x_1$ and $x_2$ be any two points in the closed interval $[a,b]$ such that $x_1 < x_2$. 

Because $f$ is assumed to be continuous on the entire interval $[a,b]$ and differentiable on $(a,b)$, it naturally follows that $f$ is continuous on the sub-interval $[x_1, x_2]$ and differentiable on the open sub-interval $(x_1, x_2)$. Therefore, the hypotheses of the Mean Value Theorem are satisfied on $[x_1, x_2]$.

By the Mean Value Theorem, there exists at least one point $c \in (x_1, x_2)$ such that:
\begin{equation*}
    f(x_2) - f(x_1) = f'(c)(x_2 - x_1)
\end{equation*}

Since we assumed $x_1 < x_2$, it follows that $(x_2 - x_1) > 0$. We now evaluate the two cases proposed in the lemma:

\paragraph{Case 1: $f'(x) > 0$ for all $x \in (a,b)$}
Because $c \in (x_1, x_2) \subset (a,b)$, we have $f'(c) > 0$. The product of two positive real numbers is positive, yielding:
\begin{equation*}
    f'(c)(x_2 - x_1) > 0
\end{equation*}
Substituting this back into our Mean Value Theorem equation, we obtain:
\begin{equation*}
    f(x_2) - f(x_1) > 0 \implies f(x_1) < f(x_2)
\end{equation*}
Since $x_1$ and $x_2$ were arbitrarily chosen points in $[a,b]$ satisfying $x_1 < x_2$, the function $f$ is strictly increasing on $[a,b]$.

\paragraph{Case 2: $f'(x) < 0$ for all $x \in (a,b)$}
Because $c \in (x_1, x_2) \subset (a,b)$, we have $f'(c) < 0$. The product of a negative number and a positive number is negative, yielding:
\begin{equation*}
    f'(c)(x_2 - x_1) < 0
\end{equation*}
Substituting this into our equation gives:
\begin{equation*}
    f(x_2) - f(x_1) < 0 \implies f(x_1) > f(x_2)
\end{equation*}
Again, since $x_1$ and $x_2$ were arbitrarily chosen points in $[a,b]$ satisfying $x_1 < x_2$, the function $f$ is strictly decreasing on $[a,b]$. 

This concludes the proof.
\end{proof}

\begin{lemma} \label{hexa area}
Let \(H_t\), \(0\le t\le 1\), be the family of equilateral hexagons with unit side defined as Proposition \ref{hexa deform}. Then $\operatorname{area}(H_{t_1})>\operatorname{area}(H_{t_2})\ \text{for every }t_1<t_2\in[0,1].$

\end{lemma}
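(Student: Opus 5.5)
We will compute the area of $H_t$ explicitly using Lemma~\ref{lemma:hexagon-area} and show that it is strictly decreasing in $t$ with the help of Lemma~\ref{Mean Value Theorem}. In the family of Proposition~\ref{hexa deform}, the side directions are $\alpha(t)=\theta_2(t)=\frac{\pi}{3}+\frac{\pi}{6}t$ and $\beta(t)=\theta_3(t)=\frac{2\pi}{3}+\frac{\pi}{3}t$. Note that $\beta(t)=2\alpha(t)$, so $\beta-\alpha=\alpha$.

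For $t\in[0,1)$ we have $0<\alpha<\beta<\pi$, so Lemma~\ref{lemma:hexagon-area} applies and gives
\[
A(t):=\operatorname{area}(H_t)=\sin\alpha+\sin 2\alpha+\sin\alpha=2\sin\alpha+\sin 2\alpha .
\]
At $t=1$ we have $\beta=\pi$, which lies on the boundary of the lemma's hypothesis. There we check directly that $H_1$ bounds the rectangle $[-1,1]\times[0,1]$, so its area is $2$. This agrees with $2\sin(\pi/2)+\sin\pi=2$. Hence $A(t)=2\sin\alpha(t)+\sin 2\alpha(t)$ holds on all of $[0,1]$, and $A$ is continuous on $[0,1]$ and differentiable on $(0,1)$. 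The decomposition argument in Lemma~\ref{lemma:hexagon-area} also works verbatim when $\beta=\pi$, since the determinants are then $\geq 0$; this gives an alternative way to handle the endpoint.

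Next we differentiate. By the chain rule,
\[
A'(t)=\frac{\pi}{6}\bigl(2\cos\alpha+2\cos2\alpha\bigr)=\frac{\pi}{3}\bigl(\cos\alpha+\cos 2\alpha\bigr),
\]
and the sum-to-product formula gives
\[
\cos\alpha+\cos2\alpha=2\cos\tfrac{3\alpha}{2}\cos\tfrac{\alpha}{2}.
\]
For $t\in(0,1)$ we have $\alpha\in(\pi/3,\pi/2)$. Then $\frac{3\alpha}{2}\in(\pi/2,3\pi/4)$, so $\cos\frac{3\alpha}{2}<0$, while $\frac{\alpha}{2}\in(\pi/6,\pi/4)$, so $\cos\frac{\alpha}{2}>0$. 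Therefore $A'(t)<0$ on $(0,1)$. By part (2) of Lemma~\ref{Mean Value Theorem}, $A$ is strictly decreasing on $[0,1]$, which is exactly the claim.

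The only delicate points are the endpoint $t=1$, where Lemma~\ref{lemma:hexagon-area} is stated with a strict inequality, and the sign analysis of $A'(t)$. The observation $\beta=2\alpha$ is what makes the sign analysis clean.
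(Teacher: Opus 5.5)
Your proof is correct and follows essentially the paper's argument. Both use Lemma~\ref{lemma:hexagon-area} with $\beta=2\alpha$ to get $\operatorname{area}(H_t)=2\sin\alpha+\sin 2\alpha$, and both conclude via Lemma~\ref{Mean Value Theorem}. The differences are cosmetic: you differentiate in $t$ and factor by sum-to-product, $\cos\alpha+\cos 2\alpha=2\cos\frac{3\alpha}{2}\cos\frac{\alpha}{2}$, whereas the paper works in the variable $\alpha$ with $f'(\alpha)=2(2\cos\alpha-1)(\cos\alpha+1)$. Your direct check of the endpoint $t=1$, where $\beta=\pi$ falls outside the strict hypothesis of Lemma~\ref{lemma:hexagon-area}, correctly fills a small point that the paper passes over.
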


\begin{proof}
From the Lemma \ref{lemma:hexagon-area}, the area of \(H_t\) is given by
\[
\operatorname{area}(H_t)
=
\sin\alpha(t)+\sin\beta(t)+\sin(\beta(t)-\alpha(t)).
\]
Now for the chosen deformation in Proposition~\ref{hexa deform},
$
\alpha(t)=\frac{\pi}{3}+\frac{\pi}{6}t \ \text{and} \
\beta(t)=\frac{2\pi}{3}+\frac{\pi}{3}t.
$
Hence $\beta(t)-\alpha(t)=\frac{\pi}{3}+\frac{\pi}{6}t=\alpha(t),$ so that $\beta(t)=2\alpha(t).$ Therefore,
$$
\operatorname{area}(H_t)
=
\sin\alpha(t)+\sin(2\alpha(t))+\sin\alpha(t)
=
2\sin\alpha(t)+\sin(2\alpha(t)).
$$
Using the identity $
\sin(2\alpha)=2\sin\alpha\cos\alpha,
$
we obtain $\operatorname{area}(H_t)
=
2\sin\alpha(t)\bigl(1+\cos\alpha(t)\bigr).$ Set
\[
f(\alpha)=2\sin\alpha(1+\cos\alpha),
\
\alpha\in\left[\frac{\pi}{3},\frac{\pi}{2}\right].
\]
Since $\alpha(t)=\frac{\pi}{3}+\frac{\pi}{6}t$, it is strictly increasing on $[0,1]$,
with $\alpha(0)=\frac{\pi}{3}$ and $\alpha(1)=\frac{\pi}{2}$.
Since $f$ is continuous on $\left[\frac{\pi}{3}, \frac{\pi}{2}\right]$, establishing that $f'(\alpha) < 0$ for all $\alpha \in \left(\frac{\pi}{3}, \frac{\pi}{2}\right)$ is sufficient to conclude, via Lemma~\ref{Mean Value Theorem}, that $f$ is strictly decreasing on $\left[\frac{\pi}{3}, \frac{\pi}{2}\right]$.
Now,
\begin{align*}
f'(\alpha)
&=2\cos\alpha(1+\cos\alpha)-2\sin^2\alpha\\
&=2\cos\alpha+2\cos^2\alpha-2(1-\cos^2\alpha)\\
&=2\cos\alpha+2\cos^2\alpha-2+2\cos^2\alpha\\
&=2\bigl(2\cos^2\alpha+\cos\alpha-1\bigr)\\
&=2(2\cos\alpha-1)(\cos\alpha+1).
\end{align*}
For $\alpha\in\left(\frac{\pi}{3},\frac{\pi}{2}\right),$
we have $0<\cos\alpha<\frac12.$
Therefore $2\cos\alpha-1<0$ and $\cos\alpha+1>0$
and hence $f'(\alpha)<0.$
Thus \(f\) is strictly decreasing on \(\left[\frac{\pi}{3},\frac{\pi}{2}\right]\).
Since $\alpha(t)$ is strictly increasing on $[0,1]$ and $f$ is strictly decreasing on $\left[\frac{\pi}{3},\frac{\pi}{2}\right]$, we conclude that
\[
\operatorname{area}(H_{t_1})>\operatorname{area}(H_{t_2})\ \text{for every }t_1<t_2\in[0,1].
\]
This proves the lemma.
\end{proof}

We are now ready to prove Theorem~\ref{main result 1} for Case $2$.

\textit{Proof for Case 2.}
Suppose $X_{\mathrm{reg}}$ is obtained by gluing regular hexagons
$H_1,H_2,\ldots,H_r$ of side length $1$. For each $t \in [0,1]$, let $(X_{\mathrm{reg}})_t$ be the surface obtained by replacing each $H_i$ with its deformation $(H_i)_t$ as described in Proposition~\ref{hexa deform}. For example, Figure~\ref{fig:twohex_combined}(A) and (B) illustrate the
translation surfaces $X_{\mathrm{reg}}$ and $(X_{\mathrm{reg}})_{0.5}$,
obtained by identifying the corresponding parallel sides of $H_1$ with
$H_2$ and of $(H_1)_{0.5}$ with $(H_2)_{0.5}$, respectively.

By construction, the systole of both $(X_{\mathrm{reg}})$ and $(X_{\mathrm{reg}})_t$ is equal to $1$ for all $t \in [0,1]$. Moreover, by Lemma~\ref{hexa area}, we have
\[
\mathrm{area}((H_i)_{t^\prime} > \mathrm{area}((H_i)_t) \; \text{for all } t^\prime<t \in [0,1] \text{ and for all } i.
\]
Summing over all hexagons, it follows that
\[
\mathrm{area}((X_{\mathrm{reg}})_{t^\prime}) > \mathrm{area}((X_{\mathrm{reg}})_t) \; \text{for all } t^\prime<t \in [0,1].
\]

Now normalize both surfaces to have unit area. Under this normalization, the systole scales inversely with the area. Hence, the normalized systoles are $\mathrm{area}((X_{\mathrm{reg}})_{t'})^{-1/2}$ and $\mathrm{area}((X_{\mathrm{reg}})_t)^{-1/2}$, respectively. Since $\mathrm{area}((X_{\mathrm{reg}})_{t^\prime}) > \mathrm{area}((X_{\mathrm{reg}})_t)$, it follows that $\mathrm{sys}((X_{\mathrm{reg}})_{t^\prime}) < \mathrm{sys}((X_{\mathrm{reg}})_t) \ \text{for all } t^\prime<t \in [0,1].$


\begin{figure}[htbp]
\centering

\begin{subfigure}[b]{0.48\textwidth}
\centering
\begin{tikzpicture}[scale=1.1,line join=round,line cap=round]

\coordinate (A0) at (0:1);
\coordinate (A1) at (60:1);
\coordinate (A2) at (120:1);
\coordinate (A3) at (180:1);
\coordinate (A4) at (240:1);
\coordinate (A5) at (300:1);

\draw[thick]
(A0)--(A1)--(A2)--(A3)--(A4)--(A5)--cycle;

\fill[red] (A0) circle (0.035);
\fill[blue] (A1) circle (0.035);
\fill[red] (A2) circle (0.035);
\fill[blue] (A3) circle (0.035);
\fill[red] (A4) circle (0.035);
\fill[blue] (A5) circle (0.035);

\node at ($(A0)!0.5!(A1)+(0.25,0)$) {$a$};
\node at ($(A1)!0.5!(A2)+(0,0.25)$) {$b$};
\node at ($(A2)!0.5!(A3)+(-0.25,0)$) {$c$};
\node at ($(A3)!0.5!(A4)+(-0.25,0)$) {$d$};
\node at ($(A4)!0.5!(A5)+(0,-0.25)$) {$e$};
\node at ($(A5)!0.5!(A0)+(0.25,0)$) {$f$};

\node[blue] at (0,0) {$H_1$};

\begin{scope}[xshift=2.5cm]

\coordinate (B0) at (0:1);
\coordinate (B1) at (60:1);
\coordinate (B2) at (120:1);
\coordinate (B3) at (180:1);
\coordinate (B4) at (240:1);
\coordinate (B5) at (300:1);

\draw[thick]
(B0)--(B1)--(B2)--(B3)--(B4)--(B5)--cycle;

\fill[red] (B0) circle (0.035);
\fill[blue] (B1) circle (0.035);
\fill[red] (B2) circle (0.035);
\fill[blue] (B3) circle (0.035);
\fill[red] (B4) circle (0.035);
\fill[blue] (B5) circle (0.035);

\node at ($(B0)!0.5!(B1)+(0.25,0)$) {$d$};
\node at ($(B1)!0.5!(B2)+(0,0.25)$) {$e$};
\node at ($(B2)!0.5!(B3)+(-0.25,0)$) {$f$};
\node at ($(B3)!0.5!(B4)+(-0.25,0)$) {$a$};
\node at ($(B4)!0.5!(B5)+(0,-0.25)$) {$b$};
\node at ($(B5)!0.5!(B0)+(0.25,0)$) {$c$};

\node[blue] at (0,0) {$H_2$};

\end{scope}

\end{tikzpicture}
\caption{$X_{reg}\in\mathcal{Q}$ in $\mathcal{H}(1,1)$}
\label{fig:twohex_a}
\end{subfigure}
\hfill
\begin{subfigure}[b]{0.48\textwidth}
\centering
\begin{tikzpicture}[scale=1.1,line join=round,line cap=round]

\def\alpha{1.30900}
\def\beta{2.61799}

\coordinate (P0) at (0,0);
\coordinate (P1) at (1,0);
\coordinate (P2) at ({1+cos(\alpha r)}, {sin(\alpha r)});
\coordinate (P3) at ({1+cos(\alpha r)+cos(\beta r)},
                     {sin(\alpha r)+sin(\beta r)});
\coordinate (P4) at ({cos(\alpha r)+cos(\beta r)},
                     {sin(\alpha r)+sin(\beta r)});
\coordinate (P5) at ({cos(\beta r)}, {sin(\beta r)});

\draw[thick]
(P0)--(P1)--(P2)--(P3)--(P4)--(P5)--cycle;

\fill[red] (P0) circle (0.03);
\fill[blue] (P1) circle (0.03);
\fill[red] (P2) circle (0.03);
\fill[blue] (P3) circle (0.03);
\fill[red] (P4) circle (0.03);
\fill[blue] (P5) circle (0.03);

\node at ($(P0)!0.5!(P1)+(0,-0.18)$) {$e$};
\node at ($(P1)!0.5!(P2)+(0.18,0.05)$) {$f$};
\node at ($(P2)!0.5!(P3)+(0.10,0.15)$) {$a$};
\node at ($(P3)!0.5!(P4)+(-0.15,0.17)$) {$b$};
\node at ($(P4)!0.5!(P5)+(-0.18,0.02)$) {$c$};
\node at ($(P5)!0.5!(P0)+(-0.05,-0.18)$) {$d$};

\node[blue] at (0.3, 0.7) {$(H_1)_{0.5}$};

\begin{scope}[xshift=2.5 cm]

\coordinate (Q0) at (0,0);
\coordinate (Q1) at (1,0);
\coordinate (Q2) at ({1+cos(\alpha r)}, {sin(\alpha r)});
\coordinate (Q3) at ({1+cos(\alpha r)+cos(\beta r)},
                     {sin(\alpha r)+sin(\beta r)});
\coordinate (Q4) at ({cos(\alpha r)+cos(\beta r)},
                     {sin(\alpha r)+sin(\beta r)});
\coordinate (Q5) at ({cos(\beta r)}, {sin(\beta r)});

\draw[thick]
(Q0)--(Q1)--(Q2)--(Q3)--(Q4)--(Q5)--cycle;

\fill[red] (Q0) circle (0.03);
\fill[blue] (Q1) circle (0.03);
\fill[red] (Q2) circle (0.03);
\fill[blue] (Q3) circle (0.03);
\fill[red] (Q4) circle (0.03);
\fill[blue] (Q5) circle (0.03);

\node at ($(Q0)!0.5!(Q1)+(0,-0.18)$) {$b$};
\node at ($(Q1)!0.5!(Q2)+(0.18,0.05)$) {$c$};
\node at ($(Q2)!0.5!(Q3)+(0.10,0.15)$) {$d$};
\node at ($(Q3)!0.5!(Q4)+(-0.15,0.15)$) {$e$};
\node at ($(Q4)!0.5!(Q5)+(-0.18,0.02)$) {$f$};
\node at ($(Q5)!0.5!(Q0)+(-0.05,-0.18)$) {$a$};

\node[blue] at (0.3, 0.7) {$(H_2)_{0.5}$};

\end{scope}

\end{tikzpicture}
\caption{$(X_{reg})_{0.5}$ in $\mathcal{H}(1,1)$}
\label{fig:twohex_b}
\end{subfigure}

\caption{}
\label{fig:twohex_combined}
\end{figure}
\begin{figure}[htbp]
\centering

\begin{subfigure}[c]{0.48\textwidth}
\centering
\begin{tikzpicture}[scale=1.1,line join=round,line cap=round]

\coordinate (P0) at (0,0);
\coordinate (P1) at (1,0);
\coordinate (P2) at (1,1);
\coordinate (P3) at (0,1);
\coordinate (P4) at (-1,1);
\coordinate (P5) at (-1,0);

\draw[thick] (P0)--(P1)--(P2)--(P3)--(P4)--(P5)--cycle;
\draw[densely dotted,thick] (P0)--(P3);

\fill[red] (P0) circle (0.03);
\fill[blue] (P1) circle (0.03);
\fill[red] (P2) circle (0.03);
\fill[blue] (P3) circle (0.03);
\fill[red] (P4) circle (0.03);
\fill[blue] (P5) circle (0.03);

\node at ($(P0)!0.5!(P1)+(0,-0.18)$) {$a$};
\node at ($(P1)!0.5!(P2)+(0.18,0)$) {$b$};
\node at ($(P2)!0.5!(P3)+(0,0.18)$) {$c$};
\node at ($(P3)!0.5!(P4)+(0,0.18)$) {$d$};
\node at ($(P4)!0.5!(P5)+(-0.18,0)$) {$e$};
\node at ($(P5)!0.5!(P0)+(0,-0.18)$) {$f$};

\node at (0,-0.6) {$(H_1)_1$};

\begin{scope}[xshift=3 cm]
\coordinate (Q0) at (0,0);
\coordinate (Q1) at (1,0);
\coordinate (Q2) at (1,1);
\coordinate (Q3) at (0,1);
\coordinate (Q4) at (-1,1);
\coordinate (Q5) at (-1,0);

\draw[thick] (Q0)--(Q1)--(Q2)--(Q3)--(Q4)--(Q5)--cycle;
\draw[densely dotted,thick] (Q0)--(Q3);

\fill[red] (Q0) circle (0.03);
\fill[blue] (Q1) circle (0.03);
\fill[red] (Q2) circle (0.03);
\fill[blue] (Q3) circle (0.03);
\fill[red] (Q4) circle (0.03);
\fill[blue] (Q5) circle (0.03);

\node at ($(Q0)!0.5!(Q1)+(0,-0.18)$) {$d$};
\node at ($(Q1)!0.5!(Q2)+(0.18,0)$) {$e$};
\node at ($(Q2)!0.5!(Q3)+(0,0.18)$) {$f$};
\node at ($(Q3)!0.5!(Q4)+(0,0.18)$) {$a$};
\node at ($(Q4)!0.5!(Q5)+(-0.18,0)$) {$b$};
\node at ($(Q5)!0.5!(Q0)+(0,-0.18)$) {$c$};

\node at (0,-0.6) {$(H_2)_1$};
\end{scope}

\end{tikzpicture}
\caption{$(X_{reg})_1 = O$ in $\mathcal{H}(1,1)$}
\label{fig:sq-tiled}
\end{subfigure}
\hfill
\begin{subfigure}[c]{0.48\textwidth}
\centering
\begin{tikzpicture}[scale=1.1,line join=round,line cap=round]

\coordinate (P0) at (0,0);
\coordinate (P1) at (1,0);
\coordinate (P2) at (1.5,0.8660254);
\coordinate (P3) at (0.5,0.8660254);
\coordinate (P4) at (-0.5,0.8660254);
\coordinate (P5) at (-1,0);

\draw[thick] (P0)--(P1)--(P2)--(P3)--(P4)--(P5)--cycle;
\draw[densely dotted,thick] (P0)--(P3);
\draw[densely dotted,thick] (P0)--(P4);
\draw[densely dotted,thick] (P1)--(P3);

\fill[red] (P0) circle (0.03);
\fill[blue] (P1) circle (0.03);
\fill[red] (P2) circle (0.03);
\fill[blue] (P3) circle (0.03);
\fill[red] (P4) circle (0.03);
\fill[blue] (P5) circle (0.03);

\node at ($(P0)!0.5!(P1)+(0,-0.18)$) {$a$};
\node at ($(P1)!0.5!(P2)+(0.18,0.05)$) {$b$};
\node at ($(P2)!0.5!(P3)+(0,0.18)$) {$c$};
\node at ($(P3)!0.5!(P4)+(0,0.18)$) {$d$};
\node at ($(P4)!0.5!(P5)+(-0.18,0.05)$) {$e$};
\node at ($(P5)!0.5!(P0)+(0,-0.18)$) {$f$};

\begin{scope}[xshift=3.5cm]
\coordinate (Q0) at (0,0);
\coordinate (Q1) at (1,0);
\coordinate (Q2) at (1.5,0.8660254);
\coordinate (Q3) at (0.5,0.8660254);
\coordinate (Q4) at (-0.5,0.8660254);
\coordinate (Q5) at (-1,0);

\draw[thick] (Q0)--(Q1)--(Q2)--(Q3)--(Q4)--(Q5)--cycle;
\draw[densely dotted,thick] (Q0)--(Q3);
\draw[densely dotted,thick] (Q0)--(Q4);
\draw[densely dotted,thick] (Q1)--(Q3);

\fill[red] (Q0) circle (0.03);
\fill[blue] (Q1) circle (0.03);
\fill[red] (Q2) circle (0.03);
\fill[blue] (Q3) circle (0.03);
\fill[red] (Q4) circle (0.03);
\fill[blue] (Q5) circle (0.03);

\node at ($(Q0)!0.5!(Q1)+(0,-0.18)$) {$d$};
\node at ($(Q1)!0.5!(Q2)+(0.18,0.05)$) {$e$};
\node at ($(Q2)!0.5!(Q3)+(0,0.18)$) {$f$};
\node at ($(Q3)!0.5!(Q4)+(0,0.18)$) {$a$};
\node at ($(Q4)!0.5!(Q5)+(-0.18,0.05)$) {$b$};
\node at ($(Q5)!0.5!(Q0)+(0,-0.18)$) {$c$};
\end{scope}

\end{tikzpicture}
\caption{$\gamma(1) = X_{max}$ in $\mathcal{H}(1,1)$}
\label{fig:hex-max}
\end{subfigure}

\caption{}
\label{fig:combined_sq_max}
\end{figure}

Define a map
$
\mathcal{F} : [0,1] \longrightarrow \mathcal{H}(k_1,\dots,k_n),
\; \text{by}\;
\mathcal{F}(t)=(X_{\mathrm{reg}})_t.
$

Then \(\mathcal{F}\) is a continuous deformation with $\mathrm{sys}((\mathcal{F}({t^\prime})) < \mathrm{sys}((\mathcal{F}(t)) \; \text{for all } t^\prime<t \in [0,1],$ with $\mathcal{F}(0)=X_{\mathrm{reg}}$ and $\mathcal{F}(1)=O$, where $O$ is a square-tiled surface in $\mathcal{H}(k_1,\dots,k_n)$ (implied by Proposition~\ref{hexa deform}). For instance, in Figure~\ref{fig:combined_sq_max}(A), square-tiled surface $(X_{reg})_1$ is depicted obtained from $(H_1)_1$ and $(H_2)_1$. 

Now, by Case $1$, there exists a continuous path $\mathcal{D}:[0,1]\longrightarrow \mathcal{H}(k_1,\dots,k_n)
$ such that $\mathcal{D}(0)=O\; \text{and}\;
\mathcal{D}(1)=X_{\max},$ where $X_{max}$ is a maximal surface (see Figure~\ref{fig:combined_sq_max}(B)) in $\mathcal{H}(k_1,\dots,k_n)$,
along which the systole strictly increases.

Now define $\gamma:[0,1]\longrightarrow \mathcal{H}(k_1,\dots,k_n)$
by
\[
\gamma(t)=
\begin{cases}
\mathcal{F}(2t), & 0\le t\le \frac12,\\[2mm]
\mathcal{D}(2t-1), & \frac12\le t\le 1.
\end{cases}
\]
Since \(\mathcal{F}(1)=\mathcal{D}(0)=O\), the path \(\gamma\) is continuous with $\gamma(0)=X_{reg}$ and $\gamma(1)= X_{max}$. Furthermore, since the systole increases strictly along both $\mathcal{F}$ and $\mathcal{D}$, we have
$\mathrm{sys}(\gamma(t'))<\mathrm{sys}(\gamma(t))$ whenever $0\le t'<t\le1$. 

 Therefore, \(\gamma\) is the required deformation from \(X_{\mathrm{reg}}\) to a maximal surface in $\mathcal{H}(k_1,\dots,k_n)$. 
 
 This establishes Theorem~\ref{main result 1} for Case $2$.

\textbf{Case 3: $X_{\mathrm{reg}} \in \mathcal{R}$.}

Finally, we construct a continuous systole-increasing deformation for translation surfaces obtained from regular octagons. We begin by establishing the following proposition and lemmas, which form the main ingredients of the proof.

\begin{proposition}\label{octa deform}
There exists a continuous family of equilateral octagons $\{O_t\}_{t\in[0,1]}$, each with side length $1$, such that $O_0$ is a regular octagon and $O_1$ is the outer boundary of three adjacent unit squares.
\end{proposition}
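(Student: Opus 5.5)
We will mimic the hinge construction of Proposition~\ref{hexa deform}. A centrally symmetric equilateral octagon with unit sides is determined by its first four side vectors $e_j(t)=(\cos\theta_j(t),\sin\theta_j(t))$, $j=1,\dots,4$, with the remaining four given by $e_{j+4}(t)=-e_j(t)$. We take linear interpolations
\begin{align*}
\theta_1(t)&=0, & \theta_2(t)&=(1-t)\tfrac{\pi}{4}+t\tfrac{\pi}{2},\\
\theta_3(t)&=(1-t)\tfrac{\pi}{2}+t\pi, & \theta_4(t)&=(1-t)\tfrac{3\pi}{4}+t\pi.
\end{align*}
At $t=0$ the directions $0,\pi/4,\pi/2,3\pi/4,\pi,\dots,7\pi/4$ give the regular octagon. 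At $t=1$ the side vectors are $(1,0),(0,1),(-1,0),(-1,0),(-1,0),(0,-1),(1,0),(1,0)$. Starting from the origin, the vertices are $(0,0),(1,0),(1,1),(0,1),(-1,1),(-2,1),(-2,0),(-1,0)$. So $O_1$ bounds the rectangle $[-2,1]\times[0,1]$, which is three adjacent unit squares, with top and bottom subdivided into unit segments.

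Closure is automatic, since $\sum_{i=1}^8 e_i(t)=\sum_{j=1}^4\bigl(e_j(t)-e_j(t)\bigr)=0$. Each $e_i(t)$ is a unit vector depending continuously on $t$, so defining $P_m(t)=\sum_{i\le m}e_i(t)$ gives a continuous family of closed equilateral octagons with opposite sides parallel. This parallelism is what the later surface construction requires.

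The main obstacle is showing that $O_t$ is a genuine simple polygon, without self-intersection, for all $t\in[0,1)$. For a closed polygon whose edge directions are non-decreasing and wind exactly once, this follows from convexity. Here $0=\theta_1\le\theta_2\le\theta_3\le\theta_4\le\pi$ holds for all $t$, because $\theta_3-\theta_2=\tfrac{\pi}{4}+\tfrac{\pi}{4}t\ge0$ and $\theta_4-\theta_3=\tfrac{\pi}{4}-\tfrac{\pi}{4}t\ge0$. The directions $\theta_j+\pi$ then continue monotonically up to $2\pi$, so the turning angles are all nonnegative and sum to $2\pi$. Hence $O_t$ is convex, and strictly convex (a nondegenerate octagon) for $t<1$. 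At $t=1$ it degenerates only in the sense that some consecutive sides become collinear, which is exactly the subdivided three-square boundary we want.

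Anticipating the area lemma that should follow this proposition, the decomposition of Lemma~\ref{lemma:hexagon-area} generalizes: the area is $\sum_{i<j\le4}\det(e_i,e_j)$. The same parametrization should later allow a monotonicity check, whose detailed verification is deferred to that lemma.
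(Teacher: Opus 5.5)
Your proposal is correct and uses the same hinge construction as the paper: central symmetry $e_{j+4}=-e_j$, linearly interpolated edge directions, closure by cancellation, and a check of the endpoints. The paper instead takes $\theta_2=\frac{\pi}{4}(1-t)$, $\theta_3=\frac{\pi}{2}$, $\theta_4=\frac{3\pi}{4}+\frac{\pi}{4}t$, ending at $[-1,2]\times[0,1]$. Your $O_t$ is the paper's $O_t$ rotated by $-\frac{\pi}{4}(1-t)$ with edge labels shifted cyclically by one, and your convexity argument supplies a point the paper leaves implicit.
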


\begin{proof}
To establish the proposition, we specify a parameterized family of equilateral octagons by continuously articulating eight segments of unit length. We prescribe the angular orientations for the first four edges as functions of a parameter $t \in [0, 1]$:
\begin{align*}
    \theta_1(t) &= 0, \\
    \theta_2(t) &= (1-t)\frac{\pi}{4}, \\
    \theta_3(t) &= \frac{\pi}{2}, \\
    \theta_4(t) &= \frac{3\pi}{4} + \frac{\pi}{4}t.
\end{align*}
The direction vectors for these initial four sides are assigned as $e_j(t) = \bigl(\cos\theta_j(t), \sin\theta_j(t)\bigr)$ for $j \in \{1, 2, 3, 4\}$. To guarantee that the resulting figure is a closed loop, the subsequent four edges are generated via reflection through the origin:
$$
    e_{4+j}(t) = -e_j(t) \quad \text{for } j \in \{1, 2, 3, 4\}.
$$
By this construction, each vector $e_i(t)$ clearly has a magnitude of one. Moreover, the sum of all eight edge vectors identically vanishes at any time $t$:
$$
    \sum_{i=1}^{8} e_i(t) = \sum_{j=1}^4 e_j(t) + \sum_{j=1}^4 \bigl(-e_j(t)\bigr) = 0.
$$

We map out the polygonal vertices $P_m(t)$ by anchoring the configuration at the origin, setting $P_0(t) = (0,0)$, and defining successive points via accumulation:
$$
    P_m(t) = \sum_{i=1}^m e_i(t) \quad \text{for } m = 1, \dots, 8.
$$
Because $\sum_{i=1}^{8} e_i(t) = 0$, the final vertex $P_8(t)$ coincides with $P_0(t)$. Thus, the sequence of vertices $O_t = P_0(t)\cdots P_7(t)$ defines a closed, equilateral octagon with unit side lengths for every $t \in [0, 1]$. For illustration, representative members of the family of equilateral octagons are shown in Figures~\ref{fig:Ot-family1}--\ref{fig:Ot-family2}. Specifically, Figure~\ref{fig:Ot-family1} depicts the octagons \(O_t\) for \(t=0\), \(0.25\) and \(0.3\). Figure~\ref{fig:Ot-family2} depicts the octagons \(O_t\) for \(t=0.5\), \(0.7\) and \(1\).

\begin{figure}[htbp]
\centering
\begin{subfigure}[t]{0.32\textwidth}
\centering
\begin{tikzpicture}[scale=0.9,line join=round,line cap=round]
\def\thetaone{0}
\def\thetatwo{0.785398}   
\def\thetathree{1.570796} 
\def\thetafour{2.356194}  

\coordinate (e1) at ({cos(\thetaone r)}, {sin(\thetaone r)});
\coordinate (e2) at ({cos(\thetatwo r)}, {sin(\thetatwo r)});
\coordinate (e3) at ({cos(\thetathree r)}, {sin(\thetathree r)});
\coordinate (e4) at ({cos(\thetafour r)}, {sin(\thetafour r)});
\coordinate (e5) at ({-cos(\thetaone r)}, {-sin(\thetaone r)});
\coordinate (e6) at ({-cos(\thetatwo r)}, {-sin(\thetatwo r)});
\coordinate (e7) at ({-cos(\thetathree r)}, {-sin(\thetathree r)});
\coordinate (e8) at ({-cos(\thetafour r)}, {-sin(\thetafour r)});

\coordinate (P0) at (0,0);
\coordinate (P1) at ($(P0)+(e1)$);
\coordinate (P2) at ($(P1)+(e2)$);
\coordinate (P3) at ($(P2)+(e3)$);
\coordinate (P4) at ($(P3)+(e4)$);
\coordinate (P5) at ($(P4)+(e5)$);
\coordinate (P6) at ($(P5)+(e6)$);
\coordinate (P7) at ($(P6)+(e7)$);

\draw[thick] (P0)--(P1)--(P2)--(P3)--(P4)--(P5)--(P6)--(P7)--cycle;

\foreach \P/\lab/\pos in {P0/$P_0$/below left, P1/$P_1$/below, P2/$P_2$/right, P3/$P_3$/right, P4/$P_4$/above, P5/$P_5$/above, P6/$P_6$/left, P7/$P_7$/above left} {
    \fill (\P) circle (0.03);
    \node[\pos,font=\scriptsize] at (\P){\lab};
}
\end{tikzpicture}
\caption{$t=0$}
\end{subfigure}%
\hfill
\begin{subfigure}[t]{0.32\textwidth}
\centering
\begin{tikzpicture}[scale=0.9,line join=round,line cap=round]
\def\thetaone{0}
\def\thetatwo{0.589049}   
\def\thetathree{1.570796}
\def\thetafour{2.552544}  

\coordinate (e1) at ({cos(\thetaone r)}, {sin(\thetaone r)});
\coordinate (e2) at ({cos(\thetatwo r)}, {sin(\thetatwo r)});
\coordinate (e3) at ({cos(\thetathree r)}, {sin(\thetathree r)});
\coordinate (e4) at ({cos(\thetafour r)}, {sin(\thetafour r)});
\coordinate (e5) at ({-cos(\thetaone r)}, {-sin(\thetaone r)});
\coordinate (e6) at ({-cos(\thetatwo r)}, {-sin(\thetatwo r)});
\coordinate (e7) at ({-cos(\thetathree r)}, {-sin(\thetathree r)});
\coordinate (e8) at ({-cos(\thetafour r)}, {-sin(\thetafour r)});

\coordinate (P0) at (0,0);
\coordinate (P1) at ($(P0)+(e1)$);
\coordinate (P2) at ($(P1)+(e2)$);
\coordinate (P3) at ($(P2)+(e3)$);
\coordinate (P4) at ($(P3)+(e4)$);
\coordinate (P5) at ($(P4)+(e5)$);
\coordinate (P6) at ($(P5)+(e6)$);
\coordinate (P7) at ($(P6)+(e7)$);

\draw[thick] (P0)--(P1)--(P2)--(P3)--(P4)--(P5)--(P6)--(P7)--cycle;

\foreach \P/\lab/\pos in {P0/$P_0$/below left, P1/$P_1$/below, P2/$P_2$/right, P3/$P_3$/right, P4/$P_4$/above, P5/$P_5$/above, P6/$P_6$/left, P7/$P_7$/above left} {
    \fill (\P) circle (0.03);
    \node[\pos,font=\scriptsize] at (\P){\lab};
}
\end{tikzpicture}
\caption{$t=0.25$}
\end{subfigure}%
\hfill
\begin{subfigure}[t]{0.32\textwidth}
\centering
\begin{tikzpicture}[scale=0.9,line join=round,line cap=round]
\def\thetaone{0}
\def\thetatwo{0.549779}   
\def\thetathree{1.570796} 
\def\thetafour{2.591814}  

\coordinate (e1) at ({cos(\thetaone r)}, {sin(\thetaone r)});
\coordinate (e2) at ({cos(\thetatwo r)}, {sin(\thetatwo r)});
\coordinate (e3) at ({cos(\thetathree r)}, {sin(\thetathree r)});
\coordinate (e4) at ({cos(\thetafour r)}, {sin(\thetafour r)});
\coordinate (e5) at ({-cos(\thetaone r)}, {-sin(\thetaone r)});
\coordinate (e6) at ({-cos(\thetatwo r)}, {-sin(\thetatwo r)});
\coordinate (e7) at ({-cos(\thetathree r)}, {-sin(\thetathree r)});
\coordinate (e8) at ({-cos(\thetafour r)}, {-sin(\thetafour r)});

\coordinate (P0) at (0,0);
\coordinate (P1) at ($(P0)+(e1)$);
\coordinate (P2) at ($(P1)+(e2)$);
\coordinate (P3) at ($(P2)+(e3)$);
\coordinate (P4) at ($(P3)+(e4)$);
\coordinate (P5) at ($(P4)+(e5)$);
\coordinate (P6) at ($(P5)+(e6)$);
\coordinate (P7) at ($(P6)+(e7)$);

\draw[thick] (P0)--(P1)--(P2)--(P3)--(P4)--(P5)--(P6)--(P7)--cycle;

\foreach \P/\lab/\pos in {P0/$P_0$/below left, P1/$P_1$/below, P2/$P_2$/right, P3/$P_3$/right, P4/$P_4$/above, P5/$P_5$/above, P6/$P_6$/left, P7/$P_7$/above left} {
    \fill (\P) circle (0.03);
    \node[\pos,font=\scriptsize] at (\P){\lab};
}
\end{tikzpicture}
\caption{$t=0.3$}
\end{subfigure}

\caption{Equilateral octagon $O_t$ for (A) $t=0$, (B) $t=0.25$ and (C) $t=0.3$.}
\label{fig:Ot-family1}
\end{figure}

\begin{figure}[htbp]
\centering
\begin{subfigure}[t]{0.32\textwidth}
\centering
\begin{tikzpicture}[scale=0.9,line join=round,line cap=round]
\def\thetaone{0}
\def\thetatwo{0.392699}   
\def\thetathree{1.570796}
\def\thetafour{2.748894}  

\coordinate (e1) at ({cos(\thetaone r)}, {sin(\thetaone r)});
\coordinate (e2) at ({cos(\thetatwo r)}, {sin(\thetatwo r)});
\coordinate (e3) at ({cos(\thetathree r)}, {sin(\thetathree r)});
\coordinate (e4) at ({cos(\thetafour r)}, {sin(\thetafour r)});
\coordinate (e5) at ({-cos(\thetaone r)}, {-sin(\thetaone r)});
\coordinate (e6) at ({-cos(\thetatwo r)}, {-sin(\thetatwo r)});
\coordinate (e7) at ({-cos(\thetathree r)}, {-sin(\thetathree r)});
\coordinate (e8) at ({-cos(\thetafour r)}, {-sin(\thetafour r)});

\coordinate (P0) at (0,0);
\coordinate (P1) at ($(P0)+(e1)$);
\coordinate (P2) at ($(P1)+(e2)$);
\coordinate (P3) at ($(P2)+(e3)$);
\coordinate (P4) at ($(P3)+(e4)$);
\coordinate (P5) at ($(P4)+(e5)$);
\coordinate (P6) at ($(P5)+(e6)$);
\coordinate (P7) at ($(P6)+(e7)$);

\draw[thick] (P0)--(P1)--(P2)--(P3)--(P4)--(P5)--(P6)--(P7)--cycle;

\foreach \P/\lab/\pos in {P0/$P_0$/below left, P1/$P_1$/below, P2/$P_2$/right, P3/$P_3$/right, P4/$P_4$/above, P5/$P_5$/above, P6/$P_6$/left, P7/$P_7$/above left} {
    \fill (\P) circle (0.03);
    \node[\pos,font=\scriptsize] at (\P){\lab};
}
\end{tikzpicture}
\caption{$t=0.5$}
\end{subfigure}%
\hfill
\begin{subfigure}[t]{0.32\textwidth}
\centering
\begin{tikzpicture}[scale=0.9,line join=round,line cap=round]
\def\thetaone{0}
\def\thetatwo{0.23562}    
\def\thetathree{1.5708}   
\def\thetafour{2.90600}   

\coordinate (e1) at ({cos(\thetaone r)}, {sin(\thetaone r)});
\coordinate (e2) at ({cos(\thetatwo r)}, {sin(\thetatwo r)});
\coordinate (e3) at ({cos(\thetathree r)}, {sin(\thetathree r)});
\coordinate (e4) at ({cos(\thetafour r)}, {sin(\thetafour r)});

\coordinate (e5) at ({-cos(\thetaone r)}, {-sin(\thetaone r)});
\coordinate (e6) at ({-cos(\thetatwo r)}, {-sin(\thetatwo r)});
\coordinate (e7) at ({-cos(\thetathree r)}, {-sin(\thetathree r)});
\coordinate (e8) at ({-cos(\thetafour r)}, {-sin(\thetafour r)});

\coordinate (P0) at (0,0);
\coordinate (P1) at ($(P0)+(e1)$);
\coordinate (P2) at ($(P1)+(e2)$);
\coordinate (P3) at ($(P2)+(e3)$);
\coordinate (P4) at ($(P3)+(e4)$);
\coordinate (P5) at ($(P4)+(e5)$);
\coordinate (P6) at ($(P5)+(e6)$);
\coordinate (P7) at ($(P6)+(e7)$);

\draw[thick] (P0)--(P1)--(P2)--(P3)--(P4)--(P5)--(P6)--(P7)--cycle;

\foreach \P/\lab/\pos in {P0/$P_0$/below left, P1/$P_1$/below, P2/$P_2$/below, P3/$P_3$/right, P4/$P_4$/above, P5/$P_5$/above, P6/$P_6$/left, P7/$P_7$/left} {
    \fill (\P) circle (0.03);
    \node[\pos,font=\scriptsize] at (\P) {\lab};
}
\end{tikzpicture}
\caption{$t=0.7$}
\end{subfigure}%
\hfill
\begin{subfigure}[t]{0.32\textwidth}
\centering
\begin{tikzpicture}[scale=0.9,line join=round,line cap=round]
\coordinate (P0) at (0,0);
\coordinate (P1) at (1,0);
\coordinate (P2) at (2,0);
\coordinate (P3) at (2,1);
\coordinate (P4) at (1,1);
\coordinate (P5) at (0,1);
\coordinate (P6) at (-1,1);
\coordinate (P7) at (-1,0);

\draw[thick] (P0)--(P1)--(P2)--(P3)--(P4)--(P5)--(P6)--(P7)--cycle;

\draw[densely dashed] (0,0)--(0,1);
\draw[densely dashed] (1,0)--(1,1);

\foreach \P/\lab/\pos in {P0/$P_0$/below, P1/$P_1$/below, P2/$P_2$/below, P3/$P_3$/right, P4/$P_4$/above, P5/$P_5$/above, P6/$P_6$/above, P7/$P_7$/left} {
    \fill (\P) circle (0.03);
    \node[\pos,font=\scriptsize] at (\P) {\lab};
}
\end{tikzpicture}
\caption{$t=1$}
\end{subfigure}

\caption{$O_t$ at (A) $t=0.5$, (B) $t=0.7$ and (C) $t=1$.}
\label{fig:Ot-family2}
\end{figure}

Evaluating the configuration at the initial parameter $t=0$ yields the orientation angles $0, \pi/4, \pi/2$, and $3\pi/4$. Consequently, the entire set of eight side directions is given by $\{0, \pi/4, \pi/2, 3\pi/4, \pi, 5\pi/4, 3\pi/2, 7\pi/4\}$. These correspond exactly to the standard geometric layout of a regular octagon, confirming that $O_0$ is a standard regular octagon with side length $1$.

Conversely, at the final state $t=1$, the defining angles collapse to $\theta_1(1)=0$, $\theta_2(1)=0$, $\theta_3(1)=\pi/2$, and $\theta_4(1)=\pi$. This determines the complete set of side vectors as:
\begin{align*}
    e_1(1) &= (1,0),  & e_2(1) &= (1,0),  & e_3(1) &= (0,1),  & e_4(1) &= (-1,0), \\
    e_5(1) &= (-1,0), & e_6(1) &= (-1,0), & e_7(1) &= (0,-1), & e_8(1) &= (1,0).
\end{align*}
Tracing these vectors sequentially from the origin produces the following vertex locations:
\begin{align*}
    P_0(1) &= (0,0),  & P_1(1) &= (1,0),  & P_2(1) &= (2,0), \\
    P_3(1) &= (2,1),  & P_4(1) &= (1,1),  & P_5(1) &= (0,1), \\
    P_6(1) &= (-1,1), & P_7(1) &= (-1,0), & P_8(1) &= (0,0).
\end{align*}

Consequently, the terminal shape $O_1$ perfectly traces the perimeter of the rectangle $[-1,2] \times [0,1]$ (as illustrated in Figure~\ref{fig:Ot-family2} (C)). This boundary wraps around three adjacent unit squares, with the top and bottom edges naturally segmented by the unit steps. Furthermore, because the structural rule $e_{4+j}(t) = -e_j(t)$ is continuously enforced, opposing edges of the octagon never lose their parallel alignment at any point during the deformation. 

This completes the proof.
\end{proof}

\begin{lemma}\label{lem:octagon-area}
Let $O$ be a centrally symmetric equilateral octagon with side length $1$ whose first four edge
directions are $\theta_1,\theta_2,\theta_3$ and $\theta_4.$
Then
\[
\operatorname{area}(O)
=
\sum_{1\le i<j\le4}\sin(\theta_j-\theta_i).
\]
\end{lemma}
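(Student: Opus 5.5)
Write $u_j=(\cos\theta_j,\sin\theta_j)$ for $j=1,\dots,4$, so the boundary of $O$ is traversed by $u_1,u_2,u_3,u_4,-u_1,-u_2,-u_3,-u_4$. I will compute the area with the shoelace formula. This avoids having to exhibit a geometric dissection into parallelograms, as was done for the hexagon in Lemma~\ref{lemma:hexagon-area}. For a closed polygon with consecutive edge vectors $v_1,\dots,v_m$, the signed area is $\frac12\sum_{i<k}\det(v_i,v_k)$: with vertices $P_k=\sum_{i\le k}v_i$, we have $\frac12\sum_k\det(P_{k-1},v_k)=\frac12\sum_{i<k}\det(v_i,v_k)$. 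I will state this as the first step, since it is the only general fact needed.

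Next I expand this double sum for the eight vectors $v_1,\dots,v_8=u_1,\dots,u_4,-u_1,\dots,-u_4$, grouping the pairs $(i,k)$ into three types.
\begin{itemize}
\item[(a)] Both indices lie in the first half: this gives $\sum_{i<j}\det(u_i,u_j)$.
\item[(b)] Both indices lie in the second half: this gives the same sum again, since $\det(-u_i,-u_j)=\det(u_i,u_j)$.
\item[(c)] One index lies in each half: this gives $-\sum_{i,j}\det(u_i,u_j)$ over all $i,j\in\{1,\dots,4\}$, which vanishes by antisymmetry.
\end{itemize}
So the signed area is exactly $\sum_{1\le i<j\le4}\det(u_i,u_j)=\sum_{i<j}\sin(\theta_j-\theta_i)$.

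The main obstacle is the sign and degeneracy issue. The identity holds for the signed area, and the geometric area equals this only if the octagon is a simple polygon traversed counterclockwise. I will therefore state the lemma under the standing convention of the construction, namely $0\le\theta_1\le\theta_2\le\theta_3\le\theta_4\le\pi$. In the family of Proposition~\ref{octa deform}, $\theta_2(t)<\theta_3$ still holds, and the directions are only weakly increasing: at $t=1$ we get $\theta_1=\theta_2$ and $\theta_4=\pi$.

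Under this convention the eight edge directions are weakly increasing through a total turn of $2\pi$. The octagon is then a (possibly degenerate) convex polygon traversed counterclockwise, so the signed area equals the Euclidean area. As a consistency check, each term satisfies $\sin(\theta_j-\theta_i)\ge0$. If the ordering hypothesis is not used, I would instead mirror Lemma~\ref{lemma:hexagon-area}: joining opposite vertices decomposes $O$ into the six parallelograms spanned by the pairs $(u_i,u_j)$, and the same formula results.

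As sanity checks, the regular octagon gives $4\sin\frac\pi4+2\sin\frac\pi2=1+\dots$, which matches $2+2\sqrt2$. The case $t=1$ gives $0+1+0+1+0+1=3$, which matches the three unit squares.
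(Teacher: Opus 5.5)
Your proof is correct and is essentially the paper's: the shoelace formula rewritten as $\frac12\sum_{i<j}\det(e_i,e_j)$, after which central symmetry makes the two same-half blocks equal and kills the mixed block (your antisymmetry argument for (c) is a cleaner version of the paper's ``mixed terms cancel in pairs''). Your orientation caveat is a fair sharpening, but the ordering hypothesis is stronger than you need, since for any simple octagon traversed counterclockwise (the paper's standing convention) the signed area already equals the area.

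Drop the fallback remark. Joining opposite vertices of an octagon gives eight triangles, not six parallelograms. Moreover, the six parallelograms spanned by the pairs $(u_i,u_j)$ have total area $\sum_{i<j}|\sin(\theta_j-\theta_i)|$, which exceeds the true area as soon as some $\sin(\theta_j-\theta_i)$ is negative, so that route cannot remove the ordering hypothesis. Also, your regular-octagon check should read $4\sin\frac\pi4+2\sin\frac\pi2=2\sqrt2+2$.
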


\begin{proof}

Let $v_0,v_1,\ldots,v_8=v_0$ be the vertices of $O$, with successive edge vectors $e_i=v_i-v_{i-1},\ 1\le i\le8.$  By the shoelace formula~\cite{Braden1986TheSA},
\[
\operatorname{area}(O)
=\frac12\sum_{i=1}^8\det(v_{i-1},v_i).
\]
Writing $v_i=e_1+\cdots+e_i,$
we obtain
\[
\det(v_{i-1},v_i)
=
\det\!\left(\sum_{k=1}^{i-1}e_k,\sum_{k=1}^{i}e_k\right)
=
\sum_{k=1}^{i-1}\det(e_k,e_i),
\]
since $\det(e_k,e_k)=0$. Hence
\[
\operatorname{area}(O)
=
\frac12\sum_{1\le i<j\le8}\det(e_i,e_j).
\]

By assumption, $e_i=(\cos\theta_i,\sin\theta_i)$ for $1\le i\le4$. By central
symmetry, $e_{i+4}=-e_i,\ 1\le i\le4.$
Moreover, $\det(e_i,e_j)=\sin(\theta_j-\theta_i).$
Using the relations $e_{i+4}=-e_i$, the contributions from opposite pairs
are equal, while the mixed terms cancel in pairs. Therefore,
\[
\frac12\sum_{1\le i<j\le8}\det(e_i,e_j)
=
\sum_{1\le i<j\le4}\det(e_i,e_j),
\]
which yields $$\operatorname{area}(O)
=
\sum_{1\le i<j\le4}\sin(\theta_j-\theta_i).$$

This completes the proof.
\end{proof}

\begin{lemma} \label{octa area}
Let \(O_t\), \(0\le t\le 1\), be the family of octagons defined in the Proposition~\ref{octa deform}. Then
$\operatorname{area}(O_{t^\prime})>\operatorname{area}(O_t)\;\text{for every}\;t^\prime<t\in[0,1].
$

\end{lemma}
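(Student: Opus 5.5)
We compute $\operatorname{area}(O_t)$ explicitly from Lemma~\ref{lem:octagon-area} and then show it is strictly decreasing in $t$ by a derivative sign argument, exactly as in Lemma~\ref{hexa area}.

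\textbf{Reparametrize.} Set $a(t)=(1-t)\frac{\pi}{4}$, which decreases strictly from $\frac{\pi}{4}$ to $0$ on $[0,1]$. The directions from Proposition~\ref{octa deform} become
\[
\theta_1=0,\quad \theta_2=a,\quad \theta_3=\tfrac{\pi}{2},\quad \theta_4=\tfrac{3\pi}{4}+\tfrac{\pi}{4}t=\pi-a .
\]

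\textbf{Compute the area.} Plug these into the six terms $\sin(\theta_j-\theta_i)$ of Lemma~\ref{lem:octagon-area}:
\begin{itemize}[label={}]
\item $(1,2)$: $\sin a$;
\item $(1,3)$: $1$;
\item $(1,4)$: $\sin(\pi-a)=\sin a$;
\item $(2,3)$: $\cos a$;
\item $(2,4)$: $\sin(\pi-2a)=\sin 2a$;
\item $(3,4)$: $\sin(\tfrac{\pi}{2}-a)=\cos a$.
\end{itemize}
Adding them gives
\[
\operatorname{area}(O_t)=g(a(t)),\qquad g(a)=1+2\sin a+2\cos a+\sin 2a,\quad a\in\left[0,\tfrac{\pi}{4}\right].
\]
Two endpoint checks confirm the formula. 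At $a=\frac{\pi}{4}$ it gives $2+2\sqrt2$, the area of the regular unit octagon. At $a=0$ it gives $3$, the area of three unit squares.

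\textbf{Sign of the derivative.} Differentiating,
\[
g'(a)=2\cos a-2\sin a+2\cos 2a .
\]
Since $\cos 2a=(\cos a-\sin a)(\cos a+\sin a)$, this factors as
\[
g'(a)=2(\cos a-\sin a)(1+\cos a+\sin a).
\]
On $(0,\frac{\pi}{4})$ we have $\cos a>\sin a$, and the second factor is positive. Hence $g'(a)>0$ there. Since $g$ is continuous on the closed interval, Lemma~\ref{Mean Value Theorem} shows that $g$ is strictly increasing on $[0,\frac{\pi}{4}]$.

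\textbf{Conclude.} If $t'<t$, then $a(t')>a(t)$, so $\operatorname{area}(O_{t'})=g(a(t'))>g(a(t))=\operatorname{area}(O_t)$.

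The only step needing care is the factorization of $g'$; the rest is bookkeeping of the six pairwise sine terms. One should also note that Lemma~\ref{lem:octagon-area} applies for every $t\in[0,1]$. Its shoelace derivation is purely algebraic, so it holds even at $t=1$, where $\theta_1=\theta_2$ and the octagon is degenerate.
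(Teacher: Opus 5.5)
Your proof is correct and follows essentially the same route as the paper. Both arguments reparametrize by $x(t)=\frac{\pi}{4}(1-t)$, obtain $1+2\sin x+2\cos x+\sin 2x$ from Lemma~\ref{lem:octagon-area}, factor the derivative as $2(\cos x-\sin x)(1+\cos x+\sin x)$, and compose the increasing function with the decreasing $x(t)$. Your endpoint checks and your remark that the shoelace computation still holds in the degenerate case $t=1$ are useful additions that the paper does not spell out.
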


\begin{proof}
By Lemma~\ref{lem:octagon-area}, $\operatorname{area}(O)
=
\sum_{1\le i<j\le4}\sin(\theta_j-\theta_i),$
where $\theta_1,\theta_2,\theta_3$, and $\theta_4$ are the directions of the first four edges of the centrally symmetric equilateral octagon $O$.
In this case, $\theta_1(t)=0,
\theta_2(t)=\frac{\pi}{4}(1-t),
\theta_3(t)=\frac{\pi}{2}\; \text{and} \;
\theta_4(t)=\frac{3\pi}{4}+\frac{\pi}{4}t.$
Set $x(t)=\frac{\pi}{4}(1-t).$
Then $\theta_2(t)=x(t),
\theta_3(t)=\frac{\pi}{2},\ \text{and}\
\theta_4(t)=\pi-x(t).$
Hence \begin{align*}
\operatorname{area}(O_t)
&=\sin(\theta_2-\theta_1)+\sin(\theta_3-\theta_1)+\sin(\theta_4-\theta_1) \\
&\quad+\sin(\theta_3-\theta_2)+\sin(\theta_4-\theta_2)+\sin(\theta_4-\theta_3)\\
&=\sin x+\sin\frac{\pi}{2}+\sin(\pi-x)
+\sin\left(\frac{\pi}{2}-x\right)\\
&\quad+\sin(\pi-2x)
+\sin\left(\frac{\pi}{2}-x\right)\\
&=1+2\sin x+2\cos x+\sin2x,
\end{align*}
where \(x=x(t)\). 

Thus $\operatorname{area}(O_t)=f(x(t)), f(x)=1+2\sin x+2\cos x+\sin 2x$. Here,  $x(t)=\frac{\pi}{4}(1-t)$ is strictly decreasing on \([0,1]\), with
$
x(0)=\frac{\pi}{4}\; \text{and} \; x(1)=0.
$
Since $f$ is continuous on $\left[0,\frac{\pi}{4}\right]$, it suffices to prove that $f'(x)>0$ for all $x\in\left(0,\frac{\pi}{4}\right)$, as then using Lemma~\ref{Mean Value Theorem} we imply that $f$ is strictly increasing on $\left[0,\frac{\pi}{4}\right]$ .
Now, \begin{align*}
f'(x)
&=2\cos x-2\sin x+2\cos 2x\\
&=2(\cos x-\sin x)+2(\cos x-\sin x)(\cos x+\sin x)\\
&=2(\cos x-\sin x)(1+\cos x+\sin x).
\end{align*}

For \(x\in\left(0,\frac{\pi}{4}\right)\), we have \(\cos x>\sin x\) and
\(1+\cos x+\sin x>0\). Hence \(f'(x)>0\) for all
\(x\in\left(0,\frac{\pi}{4}\right)\). It follows that \(f\) is strictly increasing on
\(\left[0,\frac{\pi}{4}\right]\). As \(x(t)\) is strictly decreasing on
\([0,1]\), the composition $t\longmapsto \operatorname{area}(O_t)=f(x(t))$
is strictly decreasing on \([0,1]\). Therefore,
\[
\operatorname{area}(O_{t'})>\operatorname{area}(O_t),
\ 0\le t'<t\le1.
\]

\end{proof}

We now proceed to the proof of Theorem~\ref{main result 1} corresponding to Case~$3$.

\noindent\textit{Proof for Case 3.}
Suppose \(X_{\mathrm{reg}}\) is obtained by gluing regular octagons
\(O_1,O_2,\ldots,O_r\) of side length \(1\). For each \(t\in[0,1]\), let
\((X_{\mathrm{reg}})_t\) denote the translation surface obtained by replacing each
\(O_i\) with its deformation \((O_i)_t\) described in
Proposition~\ref{octa deform}, while preserving the same side identifications.
Figure~\ref{fig:octagonsing} illustrates this construction for a single octagon, depicting
\(X_{\mathrm{reg}}\) and \((X_{\mathrm{reg}})_{0.5}\) obtained from the regular octagon
\(O_1\) and its deformation \((O_1)_{0.5}\), respectively in $\mathcal{H}(2)$.

By the construction, the systole of both $X_{\mathrm{reg}}$ and
$(X_{\mathrm{reg}})_t$ is equal to $1$ for all $t\in[0,1]$. Moreover, by
Lemma~\ref{octa area},
\[
\operatorname{area}\!\left((O_i)_{t'}\right)
>
\operatorname{area}\!\left((O_i)_t\right),
\
\text{for all }\, t'<t\in[0,1]\ \text{and for all }\, i.
\]

Summing over all octagons, we obtain $\operatorname{area}\left(\left(X_{\mathrm{reg}}\right)_{t'}\right)
>
\operatorname{area}\left(\left(X_{\mathrm{reg}}\right)_t\right),
\ \text{for all } 0\le t'<t\le1.$
Now, normalizing the surfaces
\(\left(X_{\mathrm{reg}}\right)_{t'}\) and
\(\left(X_{\mathrm{reg}}\right)_t\) to have unit area scales all lengths by the reciprocal of the square root of the area. Hence their systoles are $\frac{1}{\sqrt{\mathrm{area}((X_{{reg}})_{t^\prime})}} 
\ \text{and}\ 
\frac{1}{\sqrt{\mathrm{area}(((X_{{reg}})_t))}},$ respectively. Since $\operatorname{area}\left(\left(X_{\mathrm{reg}}\right)_{t'}\right)
>
\operatorname{area}\left(\left(X_{\mathrm{reg}}\right)_t\right),$
we conclude that
\[
\operatorname{sys}\left(\left(X_{\mathrm{reg}}\right)_{t'}\right)
<
\operatorname{sys}\left(\left(X_{\mathrm{reg}}\right)_t\right)
\ \text{for all } 0\le t'<t\le1.
\]

\begin{figure}[htbp]
\centering

\begin{subfigure}[t]{0.47\textwidth}
\centering
\begin{tikzpicture}[scale=1.4,line join=round,line cap=round]

\def\thetaone{0}
\def\thetatwo{0.785398}   
\def\thetathree{1.570796} 
\def\thetafour{2.356194}  

\coordinate (e1) at ({cos(\thetaone r)}, {sin(\thetaone r)});
\coordinate (e2) at ({cos(\thetatwo r)}, {sin(\thetatwo r)});
\coordinate (e3) at ({cos(\thetathree r)}, {sin(\thetathree r)});
\coordinate (e4) at ({cos(\thetafour r)}, {sin(\thetafour r)});
\coordinate (e5) at ({-cos(\thetaone r)}, {-sin(\thetaone r)});
\coordinate (e6) at ({-cos(\thetatwo r)}, {-sin(\thetatwo r)});
\coordinate (e7) at ({-cos(\thetathree r)}, {-sin(\thetathree r)});
\coordinate (e8) at ({-cos(\thetafour r)}, {-sin(\thetafour r)});

\coordinate (P0) at (0,0);
\coordinate (P1) at ($(P0)+(e1)$);
\coordinate (P2) at ($(P1)+(e2)$);
\coordinate (P3) at ($(P2)+(e3)$);
\coordinate (P4) at ($(P3)+(e4)$);
\coordinate (P5) at ($(P4)+(e5)$);
\coordinate (P6) at ($(P5)+(e6)$);
\coordinate (P7) at ($(P6)+(e7)$);

\draw[thick]
(P0)--(P1)--(P2)--(P3)--(P4)--(P5)--(P6)--(P7)--cycle;

\node at ($(P0)!0.5!(P1)+(0,-0.18)$) {$a$};
\node at ($(P1)!0.5!(P2)+(0.18,0)$) {$b$};
\node at ($(P2)!0.5!(P3)+(0.18,0.15)$) {$c$};
\node at ($(P3)!0.5!(P4)+(0,0.22)$) {$d$};
\node at ($(P4)!0.5!(P5)+(0,0.22)$) {$a$};
\node at ($(P5)!0.5!(P6)+(-0.18,0.15)$) {$b$};
\node at ($(P6)!0.5!(P7)+(-0.18,0)$) {$c$};
\node at ($(P7)!0.5!(P0)+(0,-0.18)$) {$d$};

\foreach \P in {P0,P1,P2,P3,P4,P5,P6,P7}
{
    \fill[red] (\P) circle (0.03);
}

\end{tikzpicture}
\caption{$X_{reg}$ in $\mathcal{R}$}
\end{subfigure}
\hfill
\begin{subfigure}[t]{0.47\textwidth}
\centering
\begin{tikzpicture}[scale=1.4,line join=round,line cap=round]

\def\thetaone{0}
\def\thetatwo{0.392699}   
\def\thetathree{1.570796}
\def\thetafour{2.748894}  

\coordinate (e1) at ({cos(\thetaone r)}, {sin(\thetaone r)});
\coordinate (e2) at ({cos(\thetatwo r)}, {sin(\thetatwo r)});
\coordinate (e3) at ({cos(\thetathree r)}, {sin(\thetathree r)});
\coordinate (e4) at ({cos(\thetafour r)}, {sin(\thetafour r)});
\coordinate (e5) at ({-cos(\thetaone r)}, {-sin(\thetaone r)});
\coordinate (e6) at ({-cos(\thetatwo r)}, {-sin(\thetatwo r)});
\coordinate (e7) at ({-cos(\thetathree r)}, {-sin(\thetathree r)});
\coordinate (e8) at ({-cos(\thetafour r)}, {-sin(\thetafour r)});

\coordinate (P0) at (0,0);
\coordinate (P1) at ($(P0)+(e1)$);
\coordinate (P2) at ($(P1)+(e2)$);
\coordinate (P3) at ($(P2)+(e3)$);
\coordinate (P4) at ($(P3)+(e4)$);
\coordinate (P5) at ($(P4)+(e5)$);
\coordinate (P6) at ($(P5)+(e6)$);
\coordinate (P7) at ($(P6)+(e7)$);

\draw[thick]
(P0)--(P1)--(P2)--(P3)--(P4)--(P5)--(P6)--(P7)--cycle;

\node at ($(P0)!0.5!(P1)+(0,-0.18)$) {$a$};
\node at ($(P1)!0.5!(P2)+(0.18,0.02)$) {$b$};
\node at ($(P2)!0.5!(P3)+(0.12,0.15)$) {$c$};
\node at ($(P3)!0.5!(P4)+(0,0.20)$) {$d$};
\node at ($(P4)!0.5!(P5)+(0,0.20)$) {$a$};
\node at ($(P5)!0.5!(P6)+(-0.12,0.15)$) {$b$};
\node at ($(P6)!0.5!(P7)+(-0.18,0.02)$) {$c$};
\node at ($(P7)!0.5!(P0)+(0,-0.18)$) {$d$};

\foreach \P in {P0,P1,P2,P3,P4,P5,P6,P7}
{
    \fill[red] (\P) circle (0.03);
}

\end{tikzpicture}
\caption{$(X_{reg})_{0.5}$}
\end{subfigure}

\caption{}
\label{fig:octagonsing}
\end{figure}

\begin{figure}[htbp]
\centering

\begin{subfigure}[t]{0.47\textwidth}
\centering
\begin{tikzpicture}[scale=1.5,line join=round,line cap=round]

\coordinate (P0) at (0,0);
\coordinate (P1) at (1,0);
\coordinate (P2) at (2,0);
\coordinate (P3) at (2,1);
\coordinate (P4) at (1,1);
\coordinate (P5) at (0,1);
\coordinate (P6) at (-1,1);
\coordinate (P7) at (-1,0);

\draw[thick]
(P0)--(P1)--(P2)--(P3)--(P4)--(P5)--(P6)--(P7)--cycle;

\draw[densely dashed] (0,0)--(0,1);
\draw[densely dashed] (1,0)--(1,1);

\node at ($(P0)!0.5!(P1)+(0,-0.18)$) {$a$};
\node at ($(P1)!0.5!(P2)+(0,-0.18)$) {$b$};
\node at ($(P2)!0.5!(P3)+(0.18,0)$) {$c$};
\node at ($(P3)!0.5!(P4)+(0,0.20)$) {$d$};

\node at ($(P4)!0.5!(P5)+(0,0.20)$) {$a$};
\node at ($(P5)!0.5!(P6)+(0,0.20)$) {$b$};
\node at ($(P6)!0.5!(P7)+(-0.18,0)$) {$c$};
\node at ($(P7)!0.5!(P0)+(0,-0.18)$) {$d$};

\foreach \P in {P0,P1,P2,P3,P4,P5,P6,P7}
{
    \fill[red] (\P) circle (0.03);
}

\end{tikzpicture}
\caption{$(X_{\mathrm{reg}})_1=O$}
\end{subfigure}
\hfill
\begin{subfigure}[t]{0.47\textwidth}
\centering
\begin{tikzpicture}[scale=1.5]

\draw (0,0)--(1,0)--(2,0)--(3,0)--(3.5,0.866025403)--(2.5,0.866025403)--(0.5,0.866025403)--cycle;
\draw (0.5,0.866025403)--(1,0)--(1.5,0.866025403)--(2,0)--(2.5,0.866025403)--(3,0);


\node at (2.5,-0.15) {$b$};
\node at (1,1.016025404) {$b$};

\node at (1.5,-0.15) {$a$};
\node at (2,1.016025404) {$a$};

\node at (0.5,-0.15) {$d$};
\node at (3,1.016025404) {$d$};

\node at (0.10,0.433012701) {$c$};
\node at (3.40,0.433012701) {$c$};

\draw[] (0,0)--(1,0);
\draw[] (2.5,0.866025403)--(3.5,0.866025403);

\draw[] (1.5,0.866025403)--(2.5,0.866025403);
\draw[] (1,0)--(2,0);

\draw[] (0.5,0.866025403)--(1.5,0.866025403);
\draw[] (2,0)--(3,0);

\draw[] (0,0)--(0.5,0.866025403);
\draw[] (3,0)--(3.5,0.866025403);

\foreach \P in {(0,0),(1,0),(2,0),(3,0),(3.5,0.866025403),
(2.5,0.866025403),(1.5,0.866025403),(0.5,0.866025403)}
{
    \draw[red] \P node{\tiny$\bullet$};
}

\end{tikzpicture}
\caption{$X_{max}$}
\end{subfigure}

\caption{(A) The square-tiled surface $(X_{\mathrm{reg}})_1\in\mathcal{H}(2)$ and (B) maximal translation surface in $\mathcal{H}(2)$.}
\label{sq-octa-max}
\end{figure}

Define $\mathcal{G}:[0,1]\longrightarrow\mathcal{H}(k_1,\dots,k_n),
\ \text{by}\
\mathcal{G}(t)=(X_{\mathrm{reg}})_t.$
Then $\mathcal{G}$ is a continuous deformation satisfying $\operatorname{sys}(\mathcal{G}(t'))
<
\operatorname{sys}(\mathcal{G}(t))
\ \text{for all }t'<t\in[0,1],$
with $\mathcal{G}(0)=X_{\mathrm{reg}}$ and $\mathcal{G}(1)=O$, where $O$ is a
square-tiled surface in $\mathcal{H}(k_1,\dots,k_n)$. For example, Figure~\ref{sq-octa-max} (A) depicts the square-tiled surface obtained by deforming the translation surface \(X_{\mathrm{reg}}\) arising from the regular octagon shown in Figure~\ref{fig:octagonsing} (A). 

Now, by Case $1$, there exists a continuous map $\mathcal{E}:[0,1]\longrightarrow\mathcal{H}(k_1,\dots,k_n)$
such that $\mathcal{E}(0)=O
\ \text{and}\
\mathcal{E}(1)=X_{\max},$ a maximal surface in $\mathcal{H}(k_1,\dots,k_n)$ (For instance, see Figure~\ref{sq-octa-max} (B)),
along which the systole strictly increases.

Now define $\gamma:[0,1]\longrightarrow\mathcal{H}(k_1,\dots,k_n)$
by
\[
\gamma(t)=
\begin{cases}
\mathcal{G}(2t), & 0\le t\le\frac12,\\[2mm]
\mathcal{E}(2t-1), & \frac12\le t\le1.
\end{cases}
\]
Since $\mathcal{G}(1)=\mathcal{E}(0)=O$, the path $\gamma$ is continuous with
$\gamma(0)=X_{\mathrm{reg}}$ and $\gamma(1)=X_{\max}$. Furthermore, the
systole increases strictly along both $\mathcal{G}$ and $\mathcal{E}$.
Therefore, $\operatorname{sys}(\gamma(t'))
<
\operatorname{sys}(\gamma(t))
\ \text{whenever }0\le t'<t\le1,$
which completes the proof for Case $3$.

Finally, combining Cases~1, 2, and~3, we conclude that Theorem~\ref{main result 1} follows.

\end{proof}

We now extend the preceding construction to another class of translation surfaces. The following theorem shows that these surfaces also admit a continuous systole-increasing deformation to maximal surfaces.

\begin{theorem}\label{thm:polygonal_deformation}
Let $X\in\mathcal{H}(k_1,\ldots,k_n)$ be a translation surface such that, when cut along its saddle connections of length $\operatorname{sys}(X)$, it decomposes into polygons, each of which is either a square, a regular hexagon, or a regular octagon of side length $\operatorname{sys}(X)$. Then there exists a continuous map
\[
\gamma:[0,1]\longrightarrow\mathcal{H}(k_1,\ldots,k_n)
\]
such that
\begin{enumerate}
    \item $\gamma(0)=X$ and $\gamma(1)$ is a maximal surface;
    \item $\operatorname{sys}(\gamma(t'))<\operatorname{sys}(\gamma(t))$ for all $t'<t$ in $[0,1]$.
\end{enumerate}
\end{theorem}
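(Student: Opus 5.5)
We will combine the three constructions from the proof of Theorem~\ref{main result 1}, applying them simultaneously to the mixed decomposition. After rescaling we may assume $\operatorname{sys}(X)=1$, so $X$ is obtained by gluing unit squares $S_1,\dots,S_a$, regular hexagons $H_1,\dots,H_b$ and regular octagons $O_1,\dots,O_c$ along unit edges by translations. Rescaling changes neither the stratum nor the scale-invariant ratio $\operatorname{sys}^2/\operatorname{area}$, which is what we track.

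\textbf{Stage 1.} For $t\in[0,1]$ we leave each square unchanged. We replace each $H_j$ by $(H_j)_t$ from Proposition~\ref{hexa deform} and each $O_k$ by $(O_k)_t$ from Proposition~\ref{octa deform}, keeping the same side pairings. The hinge families are built as a rigid motion of unit edge vectors with $e_{j+3}=-e_j$ (resp.\ $e_{4+j}=-e_j$), so paired sides stay parallel, of equal length and oppositely oriented. Hence the gluing remains a translation surface.

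The main obstacle is a compatibility issue. In the mixed setting, a hexagon side may be glued to a square side or an octagon side. The deformation must then move the direction of that edge identically in both polygons, otherwise the pairing breaks.

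\begin{itemize}
\item First we rotate each regular polygon within its own family so that its edge directions match those of its neighbours. Since all gluings are translations, every polygon shares a common direction set: the edge directions occurring in $X$.
\item Then we rotate each family so that the directions $0$ and $\pi/2$ are fixed as far as possible: the horizontal edges $\theta_1=0$ of both families, and the vertical edge $\theta_3=\pi/2$ of the octagon family.
\item Where a hexagon or octagon edge is glued to a square edge, we must check that this edge lies in a fixed direction $0$ or $\pi/2$, up to relabelling.
\item If it does not, we would instead deform the square by the matching rotation of that edge.
\end{itemize}

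I expect this compatibility to force a case analysis, possibly showing that the different polygon types cannot actually share edges in incompatible directions. The reason is that the angles $\pi/3$ and $\pi/4$ do not coexist in one translation surface unless the edges involved lie in a common direction.

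Along Stage 1 every polygon edge keeps length $1$. Short diagonals stay at least $1$: in $H_t$ and $O_t$ each interior angle stays at least $\pi/2$, so no chord between vertices is shorter than a side. Hence the systole is $1$ throughout. By Lemmas~\ref{hexa area} and~\ref{octa area} the total area strictly decreases, because $b+c\ge1$, so the normalized systole $\operatorname{area}^{-1/2}$ strictly increases. At $t=1$ every hexagon becomes two unit squares and every octagon three, so $\gamma(1/2)$ is a square-tiled surface in the same stratum.

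\textbf{Stage 2.} We apply Case~1 of Theorem~\ref{main result 1} to this square-tiled surface. This gives a systole-increasing deformation to a surface tiled by equilateral triangles, which is maximal by Theorem~\ref{maximal surfaces}. Concatenating the two stages as in Cases~2 and~3, with each stage reparametrized onto a half of $[0,1]$, gives $\gamma$ with strictly increasing systole. If $b=c=0$, Stage 1 is skipped and Case~1 applies directly.
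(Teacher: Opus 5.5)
\textbf{Verdict: there is a genuine gap, and it sits exactly where you flagged it.} Your route is the paper's:
\begin{enumerate}
\item deform every tile by the families of Propositions~\ref{hexa deform} and~\ref{octa deform}, keeping the same side pairings;
\item keep the systole at $1$ while the area falls;
\item finish with Case~1.
\end{enumerate}
Everything depends on the step you call ``the main obstacle'': that the tile deformations agree on every shared side. There you only record an expectation. The paper's proof passes over the same point with ``composing with a suitable rotation whenever necessary''. The step actually fails in general.

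Your first bullet is already false. Adjacent tiles share only the direction of their common side, not a whole direction set. The obstruction also has nothing to do with $\pi/3$ and $\pi/4$ angles meeting, since it already occurs with a single octagon and two squares.

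\textbf{A counterexample.} Take a regular unit octagon $O$ with sides in directions $k\pi/4$, a unit square $S_1$ with sides in directions $k\pi/2$, and a unit square $S_2$ with sides in directions $\pi/4+k\pi/2$.
\begin{itemize}
\item Glue each horizontal or vertical side of $O$ to the side of $S_1$ with the opposite vector.
\item Glue each diagonal side of $O$ to the side of $S_2$ with the opposite vector.
\end{itemize}
All sixteen vertices become one cone point of angle $8\cdot\frac{3\pi}{4}+8\cdot\frac{\pi}{2}=10\pi$, so $X\in\mathcal{H}(4)$. Every interior angle is at least $\pi/2$, so the only saddle connections of length at most $1$ are the eight glued sides. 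Hence $X$ satisfies the hypothesis.

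\textbf{Why every choice of family fails on it.} Take any copy of the family of Proposition~\ref{octa deform}, after rotation, reflection, relabelling, or even composition with a time-dependent rotation. It keeps one perpendicular pair of side directions of $O$ at angle $\pi/2$. The angle between the other two directions grows from $\pi/2$ to $\pi$; in the paper's normalization $\theta_4-\theta_2=\frac{\pi}{2}(1+t)$.

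Each square is glued to $O$ along both of its directions. So one of $S_1,S_2$ must follow the non-rigid pair and becomes a rhombus with acute angle $\frac{\pi}{2}(1-t)$. Its short diagonal $2\sin\bigl(\frac{\pi}{4}(1-t)\bigr)$ is a saddle connection shorter than $1$ once $t>1/3$. As $t\to1$ the normalized systole tends to $0$.

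So neither your default (squares unchanged) nor your fallback (turning a square with the side it shares) realizes Stage~1 here.

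\textbf{What closing the gap requires.} You would need either an extra hypothesis restricting how squares may be glued to hexagons and octagons, or a different deformation for such configurations. In either case you also need an argument that the choices can be made consistently across all tiles.
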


\begin{proof}
Let $X\in\mathcal{H}(k_1,\ldots,k_n)$ be a translation surface satisfying the hypotheses. Upon cutting $X$ along its shortest saddle connections, we obtain a polygonal decomposition in which every polygon is either a square, a regular hexagon, or a regular octagon.

Without loss of generality, assume that $\operatorname{sys}(X)=1$. Denote the resulting polygons by
\[
O_1,\ldots,O_k,\;
H_1,\ldots,H_l,\;
S_1,\ldots,S_m,
\]
where each $O_i$ is a regular octagon, each $H_i$ is a regular hexagon, and each $S_i$ is a square, all of side length one.

For each $t\in[0,1]$, deform the polygons $O_i$, $H_i$, and $S_i$ according to the deformation described in Theorem~\ref{main result 1}, composing with a suitable rotation whenever necessary. Denote the resulting polygons by $(O_i)_t,\;
(H_i)_t,\;
(S_i)_t,$
respectively. Since the deformations preserve the side lengths and the combinatorics of the boundary, the polygons $(O_i)_t$, $(H_i)_t$, and $(S_i)_t$ can be glued together using the same side identifications as those of $O_i$, $H_i$, and $S_i$. This yields a continuous family of translation surfaces $\{X_t\}_{t\in[0,1]}$ satisfying $\operatorname{sys}(X_t)=1,\ t\in[0,1].$

\begin{figure}[htbp]
    \centering
    \begin{tikzpicture}[scale=1.7, line join=round, line cap=round]
        \pgfmathsetmacro{\h}{sqrt(2)/2}
        \pgfmathsetmacro{\w}{sqrt(3)/2}
    
        \coordinate (O1) at (0.5, 0.5 + \h);
        \coordinate (O2) at (-0.5, 0.5 + \h);
        \coordinate (O3) at (-0.5 - \h, 0.5);
        \coordinate (O4) at (-0.5 - \h, -0.5);
        \coordinate (O5) at (-0.5, -0.5 - \h);
        \coordinate (O6) at (0.5, -0.5 - \h);
        \coordinate (O7) at (0.5 + \h, -0.5);
        \coordinate (O8) at (0.5 + \h, 0.5);
    
        \coordinate (T1) at (-0.5, 1.5 + \h);
        \coordinate (T2) at (0.5, 1.5 + \h);
        \coordinate (B1) at (-0.5, -1.5 - \h);
        \coordinate (B2) at (0.5, -1.5 - \h);
    
        \coordinate (L1) at (-0.5 - \h - \w, 1.0);
        \coordinate (L2) at (-0.5 - \h - 2*\w, 0.5);
        \coordinate (L3) at (-0.5 - \h - 2*\w, -0.5);
        \coordinate (L4) at (-0.5 - \h - \w, -1.0);
    
        \coordinate (R1) at (0.5 + \h + \w, 1.0);
        \coordinate (R2) at (0.5 + \h + 2*\w, 0.5);
        \coordinate (R3) at (0.5 + \h + 2*\w, -0.5);
        \coordinate (R4) at (0.5 + \h + \w, -1.0);
    
        \draw[thick] (O1) -- (O2);
        \draw[thick] (O3) -- (O4);
        \draw[thick] (O5) -- (O6);
        \draw[thick] (O7) -- (O8);
    
        \draw[thick] (O2) -- node[midway, left=3pt, text=red] {$e_3$} (O3);
        \draw[thick] (O4) -- node[midway, left=3pt, text=red] {$e_9$} (O5);
        \draw[thick] (O6) -- node[midway, right=3pt, text=red] {$e_3$} (O7);
        \draw[thick] (O8) -- node[midway, right=3pt, text=red] {$e_9$} (O1);
    
        \draw[thick, red] (O2) -- node[midway, left=2pt, text=red] {$e_2$} (T1)
                          -- node[midway, above=2pt, text=red] {$e_1$} (T2)
                          -- node[midway, right=2pt, text=red] {$e_{10}$} (O1);
    
        \draw[thick, red] (O5) -- node[midway, left=2pt, text=red] {$e_{10}$} (B1)
                          -- node[midway, below=2pt, text=red] {$e_1$} (B2)
                          -- node[midway, right=2pt, text=red] {$e_2$} (O6);
    
        \draw[thick, blue] (O3) -- node[midway, above right=1pt, text=red] {$e_4$} (L1);
        \draw[thick, blue] (L1) -- node[midway, above left=1pt, text=red] {$e_5$} (L2);
        \draw[thick, blue] (L2) -- node[midway, left=2pt, text=red] {$e_6$} (L3);
        \draw[thick, blue] (L3) -- node[midway, below left=1pt, text=red] {$e_7$} (L4);
        \draw[thick, blue] (L4) -- node[midway, below right=1pt, text=red] {$e_8$} (O4);
    
        \draw[thick, blue] (O8) -- node[midway, above left=1pt, text=red] {$e_8$} (R1);
        \draw[thick, blue] (R1) -- node[midway, above right=1pt, text=red] {$e_7$} (R2);
        \draw[thick, blue] (R2) -- node[midway, right=2pt, text=red] {$e_6$} (R3);
        \draw[thick, blue] (R3) -- node[midway, below right=1pt, text=red] {$e_5$} (R4);
        \draw[thick, blue] (R4) -- node[midway, below left=1pt, text=red] {$e_4$} (O7);
    
        \foreach \p in {O1,O2,O3,O4,O5,O6,O7,O8, T1,T2, B1,B2, L1,L2,L3,L4, R1,R2,R3,R4} {
            \filldraw[black] (\p) circle (0.8pt);
        }
    \end{tikzpicture}
    \caption{$X_0=X\in \mathcal{H}(8)$}
    \label{fig:translation_surface}
\end{figure}

\begin{figure}[htbp]
    \centering
    \begin{tikzpicture}[scale=1.7, line join=round, line cap=round]
        
        \pgfmathsetmacro{\c}{cos(22.5)}      
        \pgfmathsetmacro{\s}{sin(22.5)}      
        
        \pgfmathsetmacro{\sinalpha}{sin(84)} 
        \pgfmathsetmacro{\cosalpha}{cos(84)} 
        \pgfmathsetmacro{\sinbeta}{sin(168)} 
        \pgfmathsetmacro{\cosbeta}{cos(168)} 
        
        \pgfmathsetmacro{\sinrhom}{sin(60)}  
        \pgfmathsetmacro{\cosrhom}{cos(60)}  

        \coordinate (O1) at (0.5, 0.5 + \s);
        \coordinate (O2) at (-0.5, 0.5 + \s);
        \coordinate (O3) at (-0.5 - \c, 0.5);
        \coordinate (O4) at (-0.5 - \c, -0.5);
        \coordinate (O5) at (-0.5, -0.5 - \s);
        \coordinate (O6) at (0.5, -0.5 - \s);
        \coordinate (O7) at (0.5 + \c, -0.5);
        \coordinate (O8) at (0.5 + \c, 0.5);

        \coordinate (T1) at (-0.5 + \cosrhom, 0.5 + \s + \sinrhom);
        \coordinate (T2) at (0.5 + \cosrhom, 0.5 + \s + \sinrhom);
        \coordinate (B1) at (-0.5 - \cosrhom, -0.5 - \s - \sinrhom);
        \coordinate (B2) at (0.5 - \cosrhom, -0.5 - \s - \sinrhom);

        \coordinate (L1) at (-0.5 - \c - \sinalpha, 0.5 + \cosalpha);
        \coordinate (L2) at (-0.5 - \c - \sinalpha - \sinbeta, 0.5 + \cosalpha + \cosbeta);
        \coordinate (L3) at (-0.5 - \c - \sinalpha - \sinbeta, -0.5 + \cosalpha + \cosbeta);
        \coordinate (L4) at (-0.5 - \c - \sinbeta, -0.5 + \cosbeta);

        \coordinate (R1) at (0.5 + \c + \sinbeta, 0.5 - \cosbeta);
        \coordinate (R2) at (0.5 + \c + \sinbeta + \sinalpha, 0.5 - \cosbeta - \cosalpha);
        \coordinate (R3) at (0.5 + \c + \sinbeta + \sinalpha, -0.5 - \cosbeta - \cosalpha);
        \coordinate (R4) at (0.5 + \c + \sinalpha, -0.5 - \cosalpha);

        \draw[thick] (O1) -- (O2);
        \draw[thick] (O3) -- (O4);
        \draw[thick] (O5) -- (O6);
        \draw[thick] (O7) -- (O8);

        \draw[thick] (O2) -- node[midway, above left=1pt, text=red] {$e_3$} (O3);
        \draw[thick] (O4) -- node[midway, below left=1pt, text=red] {$e_9$} (O5);
        \draw[thick] (O6) -- node[midway, below right=1pt, text=red] {$e_3$} (O7);
        \draw[thick] (O8) -- node[midway, above right=1pt, text=red] {$e_9$} (O1);

        \draw[thick, red] (O2) -- node[midway, above left=1pt, text=red] {$e_2$} (T1)
                          -- node[midway, above=2pt, text=red] {$e_1$} (T2)
                          -- node[midway, below right=1pt, text=red] {$e_{10}$} (O1);

        \draw[thick, red] (O5) -- node[midway, above left=1pt, text=red] {$e_{10}$} (B1)
                          -- node[midway, below=2pt, text=red] {$e_1$} (B2)
                          -- node[midway, below right=1pt, text=red] {$e_2$} (O6);

        \draw[thick, blue] (O3) -- node[midway, above=2pt, text=red] {$e_4$} (L1);
        \draw[thick, blue] (L1) -- node[midway, left=2pt, text=red] {$e_5$} (L2);
        \draw[thick, blue] (L2) -- node[midway, left=2pt, text=red] {$e_6$} (L3);
        \draw[thick, blue] (L3) -- node[midway, below=2pt, text=red] {$e_7$} (L4);
        \draw[thick, blue] (L4) -- node[midway, below right=1pt, text=red] {$e_8$} (O4);

        \draw[thick, blue] (O8) -- node[midway, above left=1pt, text=red] {$e_8$} (R1);
        \draw[thick, blue] (R1) -- node[midway, above right=1pt, text=red] {$e_7$} (R2);
        \draw[thick, blue] (R2) -- node[midway, right=2pt, text=red] {$e_6$} (R3);
        \draw[thick, blue] (R3) -- node[midway, below right=1pt, text=red] {$e_5$} (R4);
        \draw[thick, blue] (R4) -- node[midway, below left=1pt, text=red] {$e_4$} (O7);

        \foreach \p in {O1,O2,O3,O4,O5,O6,O7,O8, T1,T2, B1,B2, L1,L2,L3,L4, R1,R2,R3,R4} {
            \filldraw[black] (\p) circle (0.8pt);
        }
    \end{tikzpicture}
    \caption{$X_t$ in $\mathcal{H}(8)$}
    \label{fig:intermediate_translation_surface}
\end{figure}

\begin{figure}[htbp]
\centering
\begin{tikzpicture}[scale=1.7]
  
  \tikzset{
    dot/.style={circle, fill=black, inner sep=1.5pt},
    blue line/.style={blue, thick},
    black line/.style={black, thick},
    red line/.style={red, thick}, 
    elabel/.style={text=magenta, font=\large}
  }

  
  \draw[blue line] (0.5,1) -- (1,2);     
  \draw[blue line] (1,2) -- (2,2);       
  \draw[blue line] (1,0) -- (1.5,1);     
  
  \draw[blue line] (1,0) -- (0.5,1);     
  \draw[blue line] (0.5,1) -- (1.5,1);   
  \draw[blue line] (1.5,1) -- (1,2);     

\draw[blue line] (1,2)--(1.5,3)--(2,2);
  \draw[black line] (1.5,1) -- (2.5,1);  
  \draw[black line] (2.5,1) -- (3.5,1);  
  \draw[black line] (3.5,1) -- (4.5,1);  
  
  \draw[black line] (2,2) -- (3,2);      
  \draw[black line] (3,2) -- (4,2);      
  \draw[black line] (4,2) -- (5,2);      
  
  \draw[black line] (1.5,1) -- (2,2);    
  \draw[black line] (2.5,1) -- (2,2);    
  \draw[black line] (2.5,1) -- (3,2);    
  \draw[black line] (3.5,1) -- (3,2);    
  \draw[black line] (3.5,1) -- (4,2);    
  \draw[black line] (4.5,1) -- (4,2);    
  \draw[black line] (4.5,1) -- (5,2);    

  \draw[red line ] (2,0) -- (3,0) -- (2.5,1) -- cycle;
  \draw[red line] (3,0) -- (3.5,1);
  
  \draw[red line] (3.5,3) -- (4.5,3) -- (4,2) -- cycle;
  \draw[red line] (3,2) -- (3.5,3);

  \draw[blue line] (4.5,1) -- (5.5,1); 
  \draw[blue line] (4.5,1)--(5,0)--(5.5,1);
  
  \draw[blue line] (5.5,1) -- (6,2);     
  \draw[blue line] (5,2) -- (5.5,3);     
  
  \draw[blue line] (4.5,1) -- (5.5,1);   
  \draw[blue line] (5.5,1) -- (5,2);     
  \draw[blue line] (5,2) -- (6,2);       
  \draw[blue line] (6,2) -- (5.5,3);     

  \node[elabel] at (0.5, 0.5) {$e_7$};
  \node[elabel] at (1.45, 0.5) {$e_8$};
  \node[elabel] at (0.5, 1.5) {$e_6$};
   \node[elabel] at (1, 2.4) {$e_5$};
  \node[elabel] at (2, 2.4) {$e_4$};

  \node[elabel] at (2.0, 0.85) {$e_9$};
  \node[elabel] at (4.0, 0.8) {$e_3$};
  \node[elabel] at (2.5, 2.2) {$e_3$};
  \node[elabel] at (4.5, 2.15) {$e_9$};

  \node[elabel] at (2.5, -0.1) {$e_1$};
  \node[elabel] at (2., 0.5) {$e_{10}$};
  \node[elabel] at (3.4, 0.5) {$e_2$};

  \node[elabel] at (4.0, 3.1) {$e_1$};
  \node[elabel] at (3.1, 2.5) {$e_2$};
  \node[elabel] at (4.5, 2.5) {$e_{10}$};

  \node[elabel] at (5.5, 0.5) {$e_5$};
  \node[elabel] at (4.6, 0.5) {$e_4$};
  \node[elabel] at (5.9, 1.5) {$e_6$};
  \node[elabel] at (6, 2.5) {$e_7$};
  \node[elabel] at (5.1, 2.5) {$e_8$};

  \foreach \x/\y in {
     1/0, 2/0, 3/0, 
    0.5/1, 1.5/1, 1.5/3, 2.5/1, 3.5/1, 4.5/1, 5.5/1, 5/0,
    1/2, 2/2, 3/2, 4/2, 5/2, 6/2,
    3.5/3, 4.5/3, 5.5/3
  } {
    \node[dot,inner sep=1.1pt] at (\x, \y) {};
  }
\end{tikzpicture}
\caption{Maximal Surface $X_1\in\mathcal{H}(8)$}
\label{fig:maximal_surface}
\end{figure}

By Theorem~\ref{main result 1}, the area of each deformed polygon is strictly decreasing with $t$. Consequently, $\operatorname{area}(X_{t'})>\operatorname{area}(X_t)$
for all $t'<t$. After normalizing each $X_t$ to have unit area, the systole scales inversely with the square root of the area. Hence, $\operatorname{sys}(X_{t'})<\operatorname{sys}(X_t)$
for all $t'<t$.
Finally, define
\[
\gamma:[0,1]\longrightarrow\mathcal{H}(k_1,\ldots,k_n),\;
\gamma(t)=X_t.
\]
Then $\gamma(0)=X$, while $\gamma(1)$ is a maximal surface by construction. Moreover, $\operatorname{sys}(\gamma(t'))<\operatorname{sys}(\gamma(t))$
for all $t'<t$. This completes the proof.

For example, the deformation of the translation surface shown in Figure~\ref{fig:translation_surface} culminates in the maximal translation surface in the stratum $\mathcal{H}(8)$, as illustrated in Figure~\ref{fig:maximal_surface}. An intermediate stage of this deformation is represented by the translation surface $X_t$, depicted in Figure~\ref{fig:intermediate_translation_surface}.

\end{proof}

\bibliographystyle{plain}
\bibliography{bibliography}

\end{document}